\crefname{property}{property}{properties}
\crefname{axiom}{}{}
\newcommand\reducesto\to
\newcommand\N{\mathbb{N}}
\newcommand\Terms\Lambda
\newcommand\subst[2]{[ #1 \coloneqq #2 ]}
\newcommand\quant[2]{#1 #2.\ }
\newcommand\type\tau
\newcommand\num[1]{\boldsymbol{#1}} 
\newcommand\limply\to 
\newcommand\tarrow\to 
\newcommand\re{\mathrel{\mathcal{R}}} 
\newcommand\monRe{\mathrel{\mon{R}}} 
\newcommand\seq\vdash 
\newcommand\monSeq\Vdash 
\newcommand\mon\mathfrak 
\def\monUnit{}
\def\monStar{}
\def\monMerge{}
\newcommand\ifNotEmpty[2]{\ifthenelse{\equal{#1}{}}{}{#2}}
\newcommand\optionalSubscript[1]{\ifthenelse{\equal{#1}{}}{}{_{#1}}}
\newcommand\optionalSuperscript[1]{\ifthenelse{\equal{#1}{}}{}{^{#1}}}
\newcommand\setmonad[1]{%
\def\mm##1{\lVert ##1 \rVert\ifthenelse{\equal{#1}{}}{}{_{#1}}}%
\def\m##1{\lvert ##1 \rvert\ifthenelse{\equal{#1}{}}{}{_{#1}}}%
\def\monTrans##1{\llbracket ##1 \rrbracket\ifthenelse{\equal{#1}{}}{}{_{#1}}}%
\def\monRe{\mathrel{\mon R}\ifthenelse{\equal{#1}{}}{}{_{#1}}}%
\def\re{\mathrel{\mathcal R}\ifthenelse{\equal{#1}{}}{}{_{#1}}}%
\def\T{T\optionalSubscript{#1}}%
\renewcommand\monUnit[1][]{\operatorname{\mon{unit}}\optionalSubscript{#1}\optionalSuperscript{##1}}%
\renewcommand\monStar[1][]{\operatorname{\mon{star}}\optionalSubscript{#1}\optionalSuperscript{##1}}%
\renewcommand\monMerge[1][]{\operatorname{\mon{merge}}\optionalSubscript{#1}\optionalSuperscript{##1}}%
\def\monSeq{\Vdash\optionalSubscript{#1}}
}
\newcommand\EM{\mathrm{EM}_1} 
\newcommand\EMG{\mathrm{EM}} 
\DeclareMathOperator\abstrOp\Lambda
\DeclareMathOperator\unit\ast
\newcommand\newtermconstant[3]{\newcommand{#1}[1][]{#2
\ifthenelse{\equal{##1}{}}{}{^{##1}}%
\ifthenelse{\equal{#3}{}}{}{_\textrm{#3}}}%
}
\newcommand\newtermconstantN[2]{\newcommand{#1}[2][]{#2
\ifthenelse{\equal{##1}{}}{}{^{##1}}%
\ifthenelse{\equal{##2}{}}{}{_{##2}}}%
}
\newcommand\termname[1]{\operatorname{\textsf{\small#1}}}
\newtermconstant\pair{\termname{pair}}{}
\newtermconstant\prr{\termname{pr}}{R}
\newtermconstant\prl{\termname{pr}}{L}
\newtermconstant\inr{\termname{in}}{R}
\newtermconstant\inl{\termname{in}}{L}
\newtermconstantN\totRec{\operatorname{rec}}
\newtermconstantN\monStarN{\operatorname{\mon{star}}}
\newtermconstantN\monRaiseN{\operatorname{\mon{raise}}}
\newtermconstant\monBind{\operatorname{\mon{bind}}}{}
\newtermconstant\reg{\termname{reg}}{}
\newtermconstant\exc{\termname{ex}}{}
\newtermconstant\ite{\termname{ite}}{}
\newtermconstant\true{\termname{true}}{}
\newtermconstant\false{\termname{false}}{}
\newtermconstant\dummy{\termname{dummy}}{}
\newtermconstant\zero{\termname{zero}}{}
\let\succ\undefined
\newtermconstant\succ{\termname{succ}}{}
\newcounter{bracketcounter}
\newcommand\bracketcolor[1]{\textcolor{parcolors!![\arabic{bracketcounter}]}{#1}}
\newcommand\openpar{\addtocounter{bracketcounter}{1}\bracketcolor{(}}
\newcommand\closepar{\bracketcolor{)}\addtocounter{bracketcounter}{-1}}
\newcommand\parenthesis[1]{\linebreak[1] \openpar \ifthenelse{\arabic{bracketcounter} < 4}{#1}\ldots\closepar }
\newcommand\showTypeInfo{\renewcommand\typeInfoZ[2]{\ifthenelse{\isodd{\arabic{typeinfocounter}}}{\overbrace{##1}^{##2}}{\underbrace{##1}_{##2}}}}
\newcounter{typeinfocounter} 
\newcommand\typeInfoZ{}\showTypeInfo
\newcommand\fixme[1]{\ifoptiondraft{\textcolor{red}{ (X)}\marginpar{\textcolor{red}{#1}}}{}}
\tikzstyle{label} = [fill=white,inner sep=1pt]
\newcommand\IfNotEmpty[2]{\ifthenelse{\equal{#1}{}}{}{#2}}
\newcommand\PrDer[3]{
\begin{tikzpicture}[baseline=0.3cm,scale=1]
  \IfNotEmpty{#3}{ 
    \draw decorate [decoration={snake,amplitude=2pt,segment length=14pt}] {(0,0.2) --  node[above,label] {\(#3\)} (0,1.25)};
  }
  \draw (-0.5,1) -- (-0.1,0) -- (0.1,0) -- (0.5,1);
  \IfNotEmpty{#1}{ 
    \node[above,label] () at (0,0.2) {\(#1\)}; 
  } 
    
\end{tikzpicture}
}
\newcommand\PrInfBr[2]{\def\extraVskip{0pt}\noLine\UnaryInfC{\PrDer{#1}{}{#2}}\noLine\def\extraVskip{2pt}}
\newcommand\PrAss[3][]{\AxiomC{\([#2\optionalSuperscript{#1}]^{#3}\)}}
\newcommand\PrAx[2][]{\AxiomC{\(#2\optionalSuperscript{#1}\)}}
\newcommand\PrUn[2][]{\UnaryInfC{\(#2\optionalSuperscript{#1}\)}}
\newcommand\PrBin[2][]{\BinaryInfC{\(#2\optionalSuperscript{#1}\)}}
\newcommand\PrTri[2][]{\TrinaryInfC{\(#2\optionalSuperscript{#1}\)}}
\newcommand\PrLbl[2][]{\LeftLabel{\(#2\)}\ifthenelse{\equal{#1}{}}{}{\RightLabel{\(#1\)}}}
\newcommand\PrInf[1][]{\ifthenelse{\equal{#1}{}}{%
\def\extraVskip{-2pt}\noLine\UnaryInfC\vdots\noLine\def\extraVskip{2pt}}{%
\noLine\UnaryInfC{\(#1\)}}}
\newcommand\RuleName[3][]{#2\mathrm{#3}_\mathrm{#1}}
\newenvironment{rulelisting}[1][\quad]{
\renewcommand\newline{\\[8mm] \hline}
\newcommand\nextrule[1][]{& \ifthenelse{\equal{##1}{}}{#1}{##1} &}
\newcommand\wholeline[1]{\multicolumn{3}{|c|}{##1}}
\newcommand\header[1]{\wholeline{##1} \\ }

\begin{displaymath}
\begin{array}{|ccc|}
\hline
}{
\newline
\end{array}
\end{displaymath}
}
\newcommand\onlyInThesis[1]{}
\newcommand\onlyInArticle[1]{#1}
\newcommand\lfa{f}
\newcommand\lfb{g}
\newcommand\lfc{h}
\newcommand\lva{x}
\newcommand\lvb{y}
\newcommand\lvc{z}
\newcommand\lta{t} 
\newcommand\ltb{u} 
\newcommand\ltc{v} 
\newcommand\lafa{P}
\newcommand\fa{A}
\newcommand\fb{B}
\newcommand\fc{C}
\newcommand\lfalse\bot
\newcommand\riA\alpha
\newcommand\riB\beta
\title{A Witness Extraction Technique by Proof Normalization \\ Based on Interactive Realizability}
\author{Giovanni Birolo}
\institute{Department of Mathematics, University of Turin\\\email{giovanni.birolo@gmail.com}}
\begin{document}

\maketitle

\renewcommand\EM{\text{EM}_1}
\newcommand\foA{\mathfrak{a}}
\newcommand\foB{\mathfrak{b}}
\newcommand\foC{\mathfrak{c}}
\newcommand\brA\zeta
\newcommand\brB\eta
\newcommand\brC\eta
\newcommand\fd{D}
\newcommand\riC\gamma
\newcommand\brApp[2]{#1,#2}
\newcommand\derA\Pi
\newcommand\derB\Sigma

\newcommand\foa{\mathfrak{a}}
\newcommand\fob{\mathfrak{b}}
\newcommand\foc{\mathfrak{c}}
\newcommand\ri[1]{\ifthenelse{\equal{#1}\foA}\riA{%
    \ifthenelse{\equal{#1}\foB}\riB{%
      \ifthenelse{\equal{#1}\foC}\riC{}%
    }%
  }%
}
\newcommand\f[1]{\ifthenelse{\equal{#1}\foA}\fa{%
    \ifthenelse{\equal{#1}\foB}\fb{%
      \ifthenelse{\equal{#1}\foC}\fc{|{#1}|}%
    }%
  }%
}
\newcommand\der[1]{\check{#1}}

\newcommand\RuleNameEM[1][1]{\mathrm{EM}\optionalSubscript{#1}}
\newcommand\RuleNameIndE{\mathrm{Ind}}
\newcommand\RuleNameAtomI{\RuleName{\mathcal A}{I}}
\newcommand\RuleNameAtomE{\RuleName{\mathcal A}{E}}

\newcommand\RedNameProp[1]{\RuleName{{#1}}{-red}}
\newcommand\RedNamePerm[2][]{\RuleName{\ifthenelse{\equal{#1}{}}{#2}{{#2}{/}{#1}}}{-perm}}
\newcommand\RedNameSimpl[1]{\RuleName{{#1}}{-simpl}}

\newcommand\RedNameInd{\RedNameProp{\RuleNameIndE}}
\newcommand\RedNameWitness{\RedNameProp{\text{Wit}}}

\begin{article}
  \begin{abstract}
    We present a new set of reductions for derivations in natural deduction that can extract witnesses from closed derivations of simply existential formulas in Heyting Arithmetic (HA) plus the Excluded Middle Law restricted to simply existential formulas (\(\EM\)), a system motivated by its interest in proof mining. 

    The reduction we have for classical logic are quite different from all existing ones. They are inspired by the informal idea of learning by making falsifiable hypothesis and checking them, and by the Interactive Realizability interpretation. We extract the witnesses directly from derivations in HA+\(\EM\) by reduction, without encoding derivations by a realizability interpretation. 
  \end{abstract}
\end{article}
\section{Introduction}

It has been known since the seminal works of G\"odel and Kreisel that classical proofs of simply existential statements have a computational content. 
Since 1990, we know that they may be interpreted as functional programs with continuations \cite{griffin90}.
A more recent technique for extracting this content in the case of the sub-classical logic HA+\(\EM\) (Heyting Arithmetic + Excluded Middle Law restricted to \(\Sigma^0_1\) formulas) is Interactive Realizability \cite{aschieriB10}, introduced by Berardi and de'Liguoro in \cite{berardidL08,berardidL09}. 

\subsection{Interactive Realizability}
Interactive realizability combines the ideas of realizability semantics for classical logic with the ideas of monotonic learning. 
It extends Kreisel modified realizability for intuitionistic logic by realizing \(\EM\). 
This is accomplished with realizers that learn witnesses for \(\Sigma^0_1\) formulas by trial and error. 
When evaluated, an interactive realizer may fail to produce correct evidence for the statement it realizes. 
In particular it may fail to provide a witness for an existential statement or to correctly decide a disjunction. 
In this case, some error is found and the realizer provides a new piece of knowledge which is added to a knowledge state. 
This knowledge state will be used when the interactive realizer is evaluated again:
if the state contains all the necessary knowledge then the realizer will succeed and provide correct evidence. 
Otherwise it will fail again and a new piece of knowledge will be added to the state. 
Berardi and Aschieri in \cite{aschieriB10} prove by a continuity argument that this process converges, 
that is, an interactive realizer may only fail a finite number of times before succeeding. 

Traditional continuation-based techniques often extract algorithms that are obscure and may fail to convey the intuitive ideas of the proof. 
Interactive realizability aims to explain the use of continuations as a particular implementation of the more general concept of learning. 
In order to reach this goal interactive realizers have the following features:
\begin{itemize}
  \item they realize statements directly without the need for a negative translation;
  \item they encode the proof faithfully, for instance avoiding blind searches for witnesses;
  \item 
    when the excluded middle instances in the proof have a low logical complexity, like \(\EM\), 
    interactive realizers use simpler constructs like states and exceptions to handle the continuation-passing nature of the computational content of classical proofs. 
\end{itemize}
The goal of this paper is to express thorough reductions the idea of trial and error from Interactive Realizability. 

\subsection{Witness extraction}
In proof theory there are reductions that express the computational interpretation we give to logical connectives, quantifiers and, in the case of arithmetic, induction.
Proofs in intuitionistic logic are shown to produce a witness for existential statements:
any proof can be to normal form, in which no more reductions are possible,  
and in a normal proof of an existential statement a witness always appears in a predictable location.
We want to obtain the same result for proofs of semi-decidable statements in intuitionistic logic augmented with \(\EM\) and reduction rules inspired by a trial-and-error interpretation. 

We work in Heyting Arithmetic (HA) extended with \(\EM\), which is weaker than classical arithmetic but strong enough to prove non-trivial non-constructive results: 
for instance the fact that every function \(f : \N \to \N\) has a minimum. 
By modifying the standard reductions for Heyting Arithmetic (see \cite{prawitz71}), we show that normal proofs of existential statements in \(\text{HA}+\EM\) produce a witness\footnote{Under suitable assumptions on the proof.}, as they do in the intuitionistic case. 

The fact that classical arithmetic is a conservative extension of HA for \(\Pi^0_2\) statements is well known and the fact that we can extract witnesses from classical proofs of \(\Sigma^0_1\) statements follows immediately. 
However proofs of these results usually employ the G\"odel-Gentzen negative translation combined with variants of Kreisel's modified realizability semantics or Friedman's translation. 
Here, by purely proof theoretical means, we prove a slightly weaker result without resorting to negative translations and 
using reductions justified in terms of Interactive Realizability. 

\subsection{Future work}
We shall prove in a future paper that our reductions are strongly normalizing. 
In this paper we prove that, \emph{if} we have normalization, then all derivations of simply existential statements compute a witness by a method we describe as trial-and-error. 

\section{A formal system for Intuitionistic Arithmetic}
In this section we recount some basic features of Intuitionistic Arithmetic HA: for a more throughout account see \cite{troelstra73}.

The language of HA is a first-order language with connectives \(\land, \lor, \limply\) and quantifiers \(\forall\) and \(\exists\).
\begin{article}
  As usual we say that a formula is \emph{closed} if it has no free variables and that a rule is \emph{atomic} if its premisses and its conclusion can only be atomic formulas\footnote{An instance of the induction rule can have atomic premisses and conclusion, but this is not required in general, so the induction rule is not atomic.}.
\end{article}

The language of HA includes: 
\begin{itemize}
  \item variables \(\lva, \lvb, \lvc, \dotsc\) for natural numbers,
  \item function symbols for all the primitive recursive functions,
  \item the equality binary predicate symbol \(=\), 
  \item arithmetical terms (metavariables \(\lta, \ltb, \ltc\)), atomic formulas (\(\lafa\)) and formulas (\(\fa, \fb, \fc\)), defined inductively as usual.
\end{itemize}
In particular we assume that we have the function symbols \(\num 0, \succ \).

A \emph{numeral} is any term of the form \(\succ^n(\num 0)\), for some \(n \in \N\). 
We assume having a set of algebraic reduction rules for primitive recursive functions. 
If, for some \(\lfb : \N^n \to \N\) and \(\lfc : \N^{n+2} \to \N\), \(\lfa\) denotes the primitive recursive \((n{+}1)\)-ary function defined by the equations:
\begin{align*}
  \lfa(0, \lva_1, \dotsc, \lva_n) &= \lfb(\lva_1, \dotsc, \lva_n), \\ 
  \lfa(\succ(\lva), \lva_1, \dotsc, \lva_n) &= \lfc(\lva, \lfa(\lva, \lva_1, \dotsc, \lva_n), \lva_1, \dotsc, \lva_n),
\end{align*}
then we add the reductions:
\begin{align*}
  \lfa(0, \lva_1, \dotsc, \lva_n) &\reducesto \lfb(\lva_1, \dotsc, \lva_n), \\ 
  \lfa(\succ(\lva), \lva_1, \dotsc, \lva_n) &\reducesto \lfc(\lva, \lfa(\lva, \lva_1, \dotsc, \lva_n), \lva_1, \dotsc, \lva_n).
\end{align*}
This reduction system is strongly normalizing and has an unique normal form. 
Thus any closed normal term is a numeral. 
We reduce terms inside formulas and we consider two formulas equal when they have the same normal form. 

We assume that we have a recursive set of atomic rules that is correct (that is, it derives true statements only) and contains:
\begin{itemize}
  \item the first-order axioms and rules for equality, 
  \item the compatibility rule for equality and functions,
  \item the \emph{Ex Falso Quodlibet} rule for atomic formulas \(\lafa\):
    \[ 
      \PrAx\lfalse 
      \PrLbl{\RuleName\lfalse{E}} 
      \PrUn\lafa 
      \DisplayProof 
    \] 
    where \(\lfalse\) denotes the \(0\)-ary relation that never holds; 
  \item the rules for zero and the successor functions:
    \[
      \PrAx{\succ(\lva) = \num 0}
      \PrUn\lfalse
      \DisplayProof
      \qquad
      \PrAx{\succ(\lva) = \succ(\lvb)}
      \PrUn{\lva = \lvb}
      \DisplayProof
    \]
\end{itemize}
The non-atomic inference rules are just those of minimal first-order logic, that is, one elimination and one introduction rule for each of the logical connectives and quantifiers and the induction rule, which we define later. 
Moreover we consider \(\lnot \fa\) syntactic sugar for \(\fa \limply \lfalse\).

Note that minimal logic extended with the Ex Falso Quodlibet rule for atomic formulas yields full intuitionistic logic. 

Since our reduction technique could conceivably be used in other first-order theories, 
we isolate some general assumptions on atomic formulas and rules that we need for our results to hold:
\begin{itemize}
  \item closed atomic formulas are decidable,
  \item any true closed atomic formula has an atomic derivation,
  \item atomic rules do not discharge assumptions,
  \item atomic rules do not bind term variables\footnote{The precise meaning of this will be made precise later.}. 
\end{itemize}
The first two assumption are very reasonable in a constructive setting such as arithmetic where we expect to have decidability at least for atomic formulas\footnote{However they may very well fail in set theory, for instance with the inclusion predicate.}. 
The other two seems also reasonable for any first-order theory. 
All of them are satisfied in HA. 
These assumption are reasonable in a constructive setting and they are satisfied in Heyting Arithmetic

We assumed that any true closed atomic formula has an atomic derivation. 
For convenience we add atomic rules for proving them in one step. 
Let \(\lafa\) be a closed atomic formula. 
If \(\lafa\) is true then we add the atomic axiom:
\[
  \PrAx{}
  \PrLbl{\RuleNameAtomI}
  \PrUn\lafa
  \DisplayProof
\]
Otherwise if \(\lafa\) is false we add the atomic rule:
\[
  \PrAx\lafa
  \PrLbl{\RuleNameAtomE}
  \PrUn\bot
  \DisplayProof
\]

In order to work on the structure of derivations we need suitable notation and terminology. 
We represent derivations as upward growing trees of formulas and we make a distinction between a formula (risp. rule) and its occurrences (risp. instances) in a derivation (see \Cref{app:occurrences_instances} for a more precise description). 
We define \emph{assumptions} and \emph{open assumptions} as usual in natural deduction, see \cite{troelstra00}, page 23.

\section{The standard reductions}

\begin{thesis}
  Reductive proof theory stems from the following observation: 
  there are derivations that are more complex than they need to be because they have unnecessary detours.
  Moreover we can produce simpler and more direct derivations with the same conclusion by simple structural manipulations called \emph{reductions}.
\end{thesis}

In standard reductive proof theory for natural deduction, several reductions are introduced: proper reductions, permutative reductions, immediate simplifications and a reduction for the induction rule (see \cite{prawitz71}).
A derivation is said to be \emph{fully normal} when none of these reductions can be performed on it. 
For our purposes fully normal derivations are not required, so we introduce only the proper reductions and the induction reduction. 

In an instance of an elimination rule, the premiss containing the connective or quantifier that is being eliminated is called the \emph{major} premiss; the other premisses are called the \emph{minor} premisses. 
We always display the major premiss in the leftmost position. 

\subsection{Proper reductions} 
Consider a derivation in which a formula occurrence \(\foA\) is both the conclusion of an introduction rule instance \(\riA\) and the major premiss of an elimination rule instance \(\riB\). 
Then we can derive the conclusion of \(\riB\) directly by removing \(\riA\) and \(\riB\) and rearranging the derivations of the premisses of \(\riA\) and of the minor premisses of \(\riB\) (if any).
Note that \(\riA\) and \(\riB\) must be instances of an introduction rule and an elimination rule for the same logical connective, since the formula introduced by \(\riA\) is the same formula eliminated by \(\riB\). 
Therefore for each logical connective we have a different type of \emph{proper reduction}. 
They are listed in \Cref{fig:proper_reductions}. 

\begin{figure}[h!]
  \caption{The proper reductions.}
  \label{fig:proper_reductions}
  \begin{rulelisting}[\reducesto]
    \header{\RedNameProp\land} 
    \PrAx{}\PrInf[\derA_1]
    \PrUn{\fa_1}
    \PrAx{}\PrInf[\derA_2]
    \PrUn{\fa_2}
    \PrLbl[\riA]{\RuleName\land{I}}
    \PrBin{\fa_1 \land \fa_2}
    \PrLbl[\riB]{\RuleName\land{E}}
    \PrUn{\fa_i}
    \DisplayProof 
    \nextrule[\xrightarrow{\RedNameProp\land}]
    \PrAx{}\PrInf[\derA_i]
    \PrUn{\fa_i}
    \DisplayProof
    \newline
    \header{\RedNameProp\lor} 
    \PrAx{}\PrInf[\derA]
    \PrUn{\fa_i}
    \PrLbl{\RuleName\lor{I}}
    \PrUn{\fa_1 \lor \fa_2}
    \PrAss{\fa_1}\riA
    \PrInf[\derA_1]
    \PrUn\fb
    \PrAss{\fa_2}\riA
    \PrInf[\derA_2]
    \PrUn\fb
    \PrLbl[\riA]{\RuleName\lor{E}}
    \PrTri\fb
    \DisplayProof 
    \nextrule[\xrightarrow{\RedNameProp\lor}]
    \PrAx{}\PrInf[\derA]
    \PrUn{\fa_i}
    \PrInf[\derA_i]
    \PrUn\fb
    \DisplayProof
    \newline
    \header{\RedNameProp\limply} 
    \PrAss\fa\riA
    \PrInf[\derA_1]
    \PrUn\fb
    \PrLbl[\riA]{\RuleName\limply{I}}
    \PrUn{\fa \limply \fb}
    \PrAx{}
    \PrInf[\derA_2]
    \PrUn\fa
    \PrLbl{\RuleName\limply{E}}
    \PrBin\fb
    \DisplayProof 
    \nextrule[\xrightarrow{\RedNameProp\limply}]
    \PrAx{}\PrInf[\derA_2]
    \PrUn\fa
    \PrInf[\derA_1]
    \PrUn\fb
    \DisplayProof
    \newline
    \header{\RedNameProp\forall} 
    \PrAx{}
    \PrInf[\derA]
    \PrLbl{\RuleName\forall{I}}
    \PrUn{\quant\forall\lva\fa}
    \PrLbl{\RuleName\forall{E}}
    \PrUn{\fa\subst\lva\lta}
    \DisplayProof 
    \nextrule[\xrightarrow{\RedNameProp\forall}]
    \PrAx{}
    \PrInf[\derA\subst\lva\lta]
    \PrUn{\fa\subst\lva\lta}
    \DisplayProof
    \newline
    \header{\RedNameProp\exists}
    \PrAx{}
    \PrInf[\derA_1]
    \PrUn{\fa\subst\lva\lta}
    \PrLbl{\RuleName\exists{I}}
    \PrUn{\quant\exists\lva\fa}
    \PrAss{\fa\subst\lva\lvb}\riA
    \PrInf[\derA_2]
    \PrUn\fb
    \PrLbl[\riA]{\RuleName\exists{E}}
    \PrBin\fb
    \DisplayProof 
    \nextrule[\xrightarrow{\RedNameProp\exists}]
    \PrAx{}
    \PrInf[\derA_1]
    \PrUn{\fa\subst\lva\lta}
    \PrInf[\derA_2\subst\lvb\lta]
    \PrUn\fb
    \DisplayProof
  \end{rulelisting}
\end{figure}

\subsection{Induction reduction}
Consider the induction rule schema \(\RuleNameIndE\) in the following form:
\[ 
  \PrAx{}
  \PrInf[\derA_1]
  \PrUn{\fa\subst\lva{\num 0}}
  \PrAss\fa\riA
  \PrInf[\derA_2]
  \PrUn{\fa\subst\lva{\succ(\lva)}}
  \PrLbl[\riA]\RuleNameIndE
  \PrBin{\fa\subst\lva\lta}
  \DisplayProof
\] 
We call \(\lta\) the \emph{main term} of the induction. 
\onlyInThesis{\fixme{relate this to the induction definition given in prelims}} 
An instance \(\riA\) of the \(\RuleNameIndE\) rule can be reduced when the main term \(\lta\) in its conclusion \(\fa\subst\lva\lta\) is either \(\num 0\) or \(\succ(\ltb)\) for some term \(\ltb\). 
Then if \(\lta = \num 0\) we can reduce \(\riA\) to: 
\[ 
  \PrAx{} 
  \PrInf[\derA_1] 
  \PrUn{\fa\subst\lva{\num 0}} 
  \DisplayProof 
\]
and if \(\lta = \succ(\ltb)\) as: 
\[ 
  \PrAx{}
  \PrInf[\derA_1]
  \PrUn{\fa\subst\lva{\num 0}}
  \PrAss\fa\riB
  \PrInf[\derA_2]
  \PrUn{\fa\subst\lva{\succ(\lva)}}
  \PrLbl[\riB]{\RuleNameIndE}
  \PrBin{\fa\subst\lva\ltb}
  \PrInf[\derA_2\subst\lva\ltb]
  \PrUn{\fa\subst\lva{\succ(\ltb)}}
  \DisplayProof
\]
We call this conditional reduction \(\RedNameInd\). 
It is easy to see that this reduction is ``unraveling'' the induction. 
When \(\ltb\) is a numeral \(\num n\), that is, a term of the form \(\succ^n\), we can apply the \(\RedNameInd\) reduction repeatedly (\(n\) times) until we remove all occurrences of the \(\RuleNameIndE\) rule and get:
\[
  \PrAx{}
  \PrInf[\derA_1]
  \PrUn{\fa\subst\lva{\num 0}}
  \PrInf[\derA_2\subst\lva{\num 0}]
  \PrUn{\fa\subst\lva{\num 1}}
  \PrInf[\derA_2\subst\lva{\num 1}]
  \PrUn{\fa\subst\lva{\num 2}}
  \PrInf
  \PrUn{\fa\subst\lva{\num n}}
  \DisplayProof
\]



\section{The Witness Extracting Reductions}

In this section we introduce an inference rule that is equivalent to 
\onlyInArticle{a restricted version of the excluded middle axiom schema}
\onlyInThesis{the restricted excluded middle axiom schema \(\EM\) defined in \Cref{def:excluded_middle}}
and two reductions involving this new rule. 
The first one, the \(\RedNameWitness\) reduction, is inspired to Interactive Realizability and it will be instrumental in converting classical derivations into constructive ones.
The second one is a permutative reduction and is needed later for technical reasons. 

\subsection{The \(\EM\) rule}
\begin{article}
  The excluded middle axiom schema is the following:
  \[ 
    \PrAx{}
    \PrLbl\EMG
    \PrUn{\fa \lor \lnot \fa} 
    \DisplayProof
  \]
  where \(\fa\) is any formula. 
  In intuitionistic logic with \(\EMG\) we can derive the double negation elimination. 
  This shows that HA extended with \(\EMG\) is equivalent to classical (Peano) arithmetic PA.

  We restrict the \(\EMG\) axiom to \(\Pi^0_1\) formulas
  and we rewrite it in an equivalent way, as in \cite{akamaBHK04}:
  \[ 
    \PrAx{}
    \PrLbl\EM
    \PrUn{\quant\forall\lva \lafa \lor \quant\exists\lva \lnot\lafa }
    \DisplayProof
  \]
  where \(\lafa\) is an atomic formula. 
\end{article}

For convenience we replace the \(\EM\) axiom schema with the equivalent \(\EM\) rule:
\[
  \PrAss{\quant\forall\lva\lafa}\riA
  \PrInf
  \PrUn\fa
  \PrAss{\lnot\lafa\subst\lva\lvb}\riA
  \PrInf
  \PrUn\fa
  \PrLbl[\riA]\EM
  \PrBin\fa
  \DisplayProof
\]
where the variable \(\lvb\) does not occur in \(\fa\) nor in any open assumption that \(\fa\) depends on except occurrences of the assumption \(\lnot \lafa\subst\lva\lvb\) (as in the \(\RuleName\exists{E}\) rule). 

The \(\EM\) rule can be easily derived by the \(\EM\) axiom by means of the \(\RuleName\lor{E}\) and \(\RuleName\exists{E}\) rules and it can easily derive the \(\EM\) axiom. 

The derivations are given in \Cref{sec:em_axiom_rule_equiv}.

In the following we refer to the assumption \(\quant\forall\lva\lafa\) in the derivation of the leftmost premiss of the \(\EM\) rule as the \emph{universal assumption} and to the assumption \(\lnot\lafa\subst\lva\lvb\) in the derivation of the rightmost premiss as the \emph{existential assumption}.

\begin{thesis}
  We can also write the \(\EM\) rule in sequent style as: 
  \[
    \PrAx{\Gamma, \riA : \quant\forall\lva \lafa \seq \fa}
    \PrAx{\Gamma, \riA : \quant\exists\lva \lnot \lafa \seq \fa}
    \PrLbl[\riA]\EM
    \PrBin{\Gamma \seq \fa}
    \DisplayProof
  \]
\end{thesis}

The universal assumption \(\quant\forall\lva\lafa\) is a \(\Pi^0_1\) formula and thus \emph{negatively decidable}, meaning that a finite piece of evidence is enough to prove it false: 
a counterexample, a natural number \(m\) such that \(\lafa\subst\lva{\num m}\) does not hold. 
Moreover, if we know that it is false, then a counterexample exists and we can find it in a finite time, in the worst case by means of a blind search through all the natural numbers. 

On the other hand, in order to prove the universal assumption, we need a possibly infinite evidence, namely, we may need to check \(\lafa\subst\lva{\num m}\) for all natural numbers \(m\) and this cannot be effectively done (at least when we have no information on \(\lafa\)). 

The existential assumption \(\lnot\lafa\subst\lva\lvb\) is not actually a existential formula. 
However it is easy to see that it takes the place of the assumption discharged by the \(\RuleName\exists{E}\) rule. 
For more details you can see how the two rules are related in \Cref{sec:em_axiom_rule_equiv}.

We say that we can prove the existential assumption true by showing a witness, namely a number \(m\) such that \(\lnot\lafa\subst\lva{\num m}\). 
Thus the existential assumption behaves as if it were \emph{positively decidable}: 
when it is true, we have a terminating algorithm to find the finite evidence needed to prove it. 
However, when it false, we have no way to effectively decide if it is false. 

Note that a counterexample \(m\) for the universal assumption \(\quant\forall\lva\lafa\) is a witness for the existential assumption since in that case \(\lnot\lafa\subst\lva{\num m}\) holds.


\subsection{Witness reduction}

Consider a derivation \(\derA\) ending with an instance \(\riA\) of the \(\EM\) rule for the atomic formula \(\lafa\):
\[
  \PrAss{\quant\forall\lva\lafa}\riA
  \PrInf[\derA_1]
  \PrUn\fa
  \PrAss{\lnot\lafa\subst\lva\lvb}\riA
  \PrInf[\derA_2]
  \PrUn\fa
  \PrLbl[\riA]\EM
  \PrBin\fa
  \DisplayProof
\]

A priori we do not know any counterexample to the universal assumption (we do not even know whether it holds or not), so we begin by looking at how the assumption is used in \(\derA_1\). 
In \(\derA_1\), consider all the instances \(\riB_1, \dotsc, \riB_n\) of the \(\RuleName\forall{E}\) rule whose premiss is an occurrence of the universal assumption \(\quant\forall\lva\lafa\) and whose conclusion is the occurrence of a closed (atomic) formula:
\[ 
  \PrAss{\quant\forall\lva\lafa}\riA
  \PrLbl[\riB_1]{\RuleName\forall{E}}
  \PrUn{\lafa\subst\lva{\lta_1}}
  \PrInf
  \DisplayProof \quad \dotso \quad
  \PrAss{\quant\forall\lva\lafa}\riA
  \PrLbl[\riB_n]{\RuleName\forall{E}}
  \PrUn{\lafa\subst\lva{\lta_n}}
  \PrInf
  \DisplayProof 
\]
These represent the concrete instances of the universal assumption that are used to derive \(\fa\) in \(\derA_1\). 
Since the conclusions of \( \riB_1, \dotsc, \riB_n\) are closed atomic formulas they are decidable. 
Therefore we can derive the true concrete instances directly with the atomic axiom \(\RuleNameAtomI\) instead of deducing them from the universal assumption. 
We distinguish two cases.
\begin{itemize}
  \item
    If \(\lafa\subst\lva{\lta_i}\) is true for all \(i\) we replace each \(\riB_i\) with the atomic axiom for \(\lafa\subst\lva{\lta_i}\):
    \[ 
      \PrAss{\quant\forall\lva\lafa}\riA
      \PrLbl[\riB_i]{\RuleName\forall{E}}
      \PrUn{\lafa\subst\lva{\lta_i}}
      \PrInf
      \DisplayProof \quad \leadsto \quad 
      \PrAx{}
      \PrLbl{\RuleNameAtomI}
      \PrUn{\lafa\subst\lva{\lta_i}}
      \PrInf
      \DisplayProof
    \] 
    We call this new derivation \(\derA_1'\). 

    Now two situations are possible: either \(\derA_1\) needs the universal assumption only to deduce the concrete instances \(\riB_1, \dotsc, \riB_n\) or not. 

    \begin{itemize}
      \item The first case happens when \(\derA_1'\) contains no more occurrences of the universal assumption discharged by \(\riA\),
        that is, the universal assumption only occurs in \(\derA_1\) as the premiss of \(\riB_1, \dotsc, \riB_n\). 
        In this case \(\derA_1'\) is a self-contained derivation of \(\fa\) and we can replace the whole \(\derA\) with \(\derA_1'\).
      \item Otherwise \(\derA_1'\) still contains some occurrence  of the universal assumption.
        Then \(\derA_1'\) does need the universal assumption itself and not just some concrete instances of it. 
        In this case we can only replace \(\derA_1\) with \(\derA_1'\) in \(\derA\), but we cannot eliminate the \(\EM\) rule instance \(\riA\) from the derivation. 
    \end{itemize}

  \item 
    Otherwise there is some \(i\) such that \(\lafa\subst\lva{\lta_i}\) is false. 
    This means that the universal assumption itself is false, since we have found the counterexample \(\lta_i\). 
    Moreover \(\lta_i\) is a witness for the existential assumption, meaning that 
    we can replace \(\lvb\) with \(\lta_i\) in \(\derA_2\) and all the occurrences of the assumption \(\lnot\lafa\subst\lva\lvb\) with a derivation of \(\lnot\lafa\subst\lva\lta\):  
    \[ 
      \PrAss{\lnot\lafa\subst\lva{\lta_i}}\riA
      \PrInf 
      \DisplayProof \quad \leadsto \quad 
      \PrAss{\lafa\subst\lva{\lta_i}}{\riB_i'} 
      \PrLbl{\RuleNameAtomE} 
      \PrUn\bot 
      \PrLbl[\riB_i']{\RuleName\limply{I}} 
      \PrUn{\lnot\lafa\subst\lva{\lta_i}} 
      \PrInf 
      \DisplayProof 
    \] 
    We call this new derivation \(\derA_2'\). 

    Note that in this case we replace all the occurrences of the existential assumption in \(\derA_2\) and thus \(\derA_2'\) is self-contained derivation of \(\fa\). 
    Therefore we can replace \(\derA\) with \(\derA_2'\).
\end{itemize}
We call this reduction \(\RedNameWitness\).

The gist of the \(\RedNameWitness\) reduction is that we look for counterexamples to the universal assumption in \(\derA_1\). 
If we do not find one then we have checked that all the concrete instances of the universal assumption hold.
Moreover if \(\derA_1\) uses the universal assumption exclusively to deduce these concrete instances, then we get a direct derivation of \(\fa\) without using the \(\EM\) rule.
On the other hand if we find a counterexample then we know that we can put it in \(\derA_2\) and get another direct derivation of \(\fa\).

In some sense we have a procedure to decide which one of the subderivation of the \(\EM\) rule is the effective one, 
Note that this procedure fails when we do not find counterexamples to the universal assumption but we cannot completely eliminate its occurrences from \(\derA_1\). 
Our main result can be thought of as the proof that, when the conclusion of a derivation is simply existential, this 
``failure'' of the procedure
does not happen. 
The whole reduction is summarized in \Cref{fig:em_reduction}.

\begin{figure}[h!]
  \caption{The \(\RedNameWitness\) reduction possible outcomes.}
  \label{fig:em_reduction}
  \begin{rulelisting}
    \header{\RedNameWitness}
    \hline
    \wholeline{
      \begin{tikzpicture}
        \node (forall 1) at (0,2) [] {
          \PrAss{\quant\forall\lva\lafa}\riA
          \PrLbl[\riB_1]{\RuleName\forall{E}}
          \PrUn{\lafa\subst\lva{\lta_1}}
          \DisplayProof
        };
        \node (forall n) at (3,2) [] {
          \PrAss{\quant\forall\lva\lafa}\riA
          \PrLbl[\riB_n]{\RuleName\forall{E}}
          \PrUn{\lafa\subst\lva{\lta_n}}
          \DisplayProof
        };
        \node (forall x) at (5.5,2) [] {
          \PrAss{\quant\forall\lva\lafa}\riA
          \DisplayProof
        };
        \node (der1) at (0.93,0.25) [inner sep=0pt] {}; 

        \node (root) at (4,0) [inner sep=0pt] {
          \PrAx{}
          \PrInf[\derA_1]
          \PrUn\fa
          \PrAx{\hspace{4cm}}
          \PrAss{\lafa\subst\lva\lvb}\riA
          \PrInf[\derA_2]
          \PrUn\fa
          \PrLbl[\riA]\EM
          \PrTri\fa
          \DisplayProof
        };
        \draw [dotted] (forall 1) -- (forall n);
        \draw [dotted] (forall n) -- (forall x);
        \draw [dotted] (forall 1) -- (der1);
        \draw [dotted] (forall n) -- (der1);
        \draw [dotted] (forall x) -- (der1);
      \end{tikzpicture}
    } 
    \\
    \header{\text{An derivation ending with an \(\EM\) rule instance reduces to:}}
    \hline
    \wholeline{
      \begin{tikzpicture}
        \node (forall 1) at (0,2) [] {
          \PrAx{}
          \PrLbl{\RuleNameAtomI}
          \PrUn{\lafa\subst\lva{\lta_1}}
          \DisplayProof
        };
        \node (forall n) at (3,2) [] {
          \PrAx{}
          \PrLbl\RuleNameAtomI
          \PrUn{\lafa\subst\lva{\lta_n}}
          \DisplayProof
        };
        \node (forall x) at (5.5,2) [] {
          \PrAss{\quant\forall\lva\lafa}\riA
          \DisplayProof
        };
        \node (der1) at (0.93,0.25) [inner sep=0pt] {}; 

        \node (root) at (4,0) [] {
          \PrAx{}
          \PrInf[\derA_1]
          \PrUn\fa
          \PrAx{\hspace{4cm}}
          \PrAss{\lafa\subst\lva\lvb}\riA
          \PrInf[\derA_2]
          \PrUn\fa
          \PrLbl[\riA]\EM
          \PrTri\fa
          \DisplayProof
        };
        \draw [dotted] (forall 1) -- (forall n);
        \draw [dotted] (forall n) -- (forall x);
        \draw [dotted] (forall 1) -- (der1);
        \draw [dotted] (forall n) -- (der1);
        \draw [dotted] (forall x) -- (der1);
      \end{tikzpicture}
    }
    \\
  \header{\begin{tabular}{c}when all \(\lafa\subst\lva{\lta_i}\) hold and some occurrences of the universal assumption remain.\end{tabular}}
    \hline
    \wholeline{
      \begin{tikzpicture}
        \node (forall 1) at (0,2) [] {
          \PrAx{}
          \PrLbl{\RuleNameAtomI}
          \PrUn{\lafa\subst\lva{\lta_1}}
          \DisplayProof
        };
        \node (forall n) at (3,2) [] {
          \PrAx{}
          \PrLbl\RuleNameAtomI
          \PrUn{\lafa\subst\lva{\lta_n}}
          \DisplayProof
        };
        \node (der1) at (0.93,0.25) [inner sep=0pt] {}; 

        \node (root) at (1,0) [] {
          \PrAx{}
          \PrInf[\derA_1]
          \PrUn\fa
          \DisplayProof
        };
        \draw [dotted] (forall 1) -- (forall n);
        \draw [dotted] (forall 1) -- (root);
        \draw [dotted] (forall n) -- (root);
      \end{tikzpicture}
    }
    \\
  \header{\begin{tabular}{c}when all \(\lafa\subst\lva{\lta_i}\) hold and no occurrence of the universal assumption remains.\end{tabular}}
    \hline
    \wholeline{
\PrAss{\lafa\subst\lva{\lta_i}}{\riB'} 
      \PrLbl{\RuleNameAtomE} 
      \PrUn\bot 
      \PrLbl[\riB']{\RuleName\limply{I}} 
      \PrUn{\lnot\lafa\subst\lva{\lta_i}} 
      \PrInf[\derA_2]
      \PrUn\fa
      \DisplayProof
    } \\ 
    \wholeline{\text{when some \(\lafa\subst\lva{\lta_i}\) does not hold.}}
  \end{rulelisting}
\end{figure}

%
%
%

\subsection{Permutative reduction for \(\EM\)}

The permutative reduction for \(\EM\) is defined in the same way as the permutative reduction for the \(\RuleName\lor{E}\) rule, 
that is, 
when the conclusion of a \(\EM\) rule instance is the major premiss of an elimination rule instance \(\RuleName\ast{E}\):
\[
  \PrAss{\quant\forall\lva\lafa}\riC
  \PrInf[\derA_1]
  \PrUn\fa
  \PrAss{\lnot\lafa\subst\lva\lvb}\riC
  \PrInf[\derA_2]
  \PrUn\fa
  \PrLbl[\riC]\EM
  \PrBin\fa
  \PrAx{}
  \PrInf[\bar\derA]
  \PrLbl{\RuleName\ast{E}}
  \PrBin\fb
  \DisplayProof
\]
reduces to:
\[
  \PrAss{\quant\forall\lva\lafa}\riA
  \PrInf[\derA_1]
  \PrUn\fa
  \PrAx{\bar\derA}
  \PrLbl{\RuleName\ast{E}}
  \PrBin\fb
  \PrAss{\lnot\lafa\subst\lva\lvb}\riA
  \PrInf[\derA_2]
  \PrUn\fa
  \PrAx{\bar\derA}
  \PrLbl{\RuleName\ast{E}}
  \PrBin\fb
  \PrLbl[\riA]\EM
  \PrBin\fb
  \DisplayProof
\]
where \(\bar\derA\) stands for the derivations of the remaining minor premisses of \(\riB\) if any. 
We denote this reduction as \(\RedNamePerm\EM\). 
More explicitly, we can define a permutative reduction for each elimination rule, see \Cref{fig:em_perms_connectives} and \Cref{fig:em_perms_quantifiers}. 
\begin{figure}[h]
  \caption{The permutative reductions of the \(\EM\) rule with the \(\RuleName\land{E}, \RuleName\lor{E}\) and \(\RuleName\limply{E}\) rules.}
  \label{fig:em_perms_connectives}
  \begin{rulelisting}[\reducesto]
    \header{\RedNamePerm[\land]\EM}
    \PrAss{\quant\forall\lva\lafa}\riA
    \PrInf[\derA_1]
    \PrUn{\fa_1 \land \fa_2}
    \PrAss{\lnot\lafa\subst\lva\lvb}\riA
    \PrInf[\derA_2]
    \PrUn{\fa_1 \land \fa_2}
    \PrLbl[\riA]\EM
    \PrBin{\fa_1 \land \fa_2}
    \PrLbl{\RuleName\land{E}}
    \PrUn{\fa_i}
    \DisplayProof
    \nextrule
    \PrAss{\quant\forall\lva\lafa}\riA
    \PrInf[\derA_1]
    \PrUn{\fa_1 \land \fa_2}
    \PrLbl{\RuleName\land{E}}
    \PrUn{\fa_i}
    \PrAss{\lnot\lafa\subst\lva\lvb}\riA
    \PrInf[\derA_2]
    \PrUn{\fa_1 \land \fa_2}
    \PrLbl{\RuleName\land{E}}
    \PrUn{\fa_i}
    \PrLbl[\riA]\EM
    \PrBin{\fa_i}
    \DisplayProof
    \newline
    \header{\RedNamePerm[\lor]\EM}
    \wholeline{
      \PrAss{\quant\forall\lva\lafa}\riB
      \PrInf[\derB_1]
      \PrUn{\fa_1 \lor \fa_2}
      \PrAss{\lnot\lafa\subst\lva\lvb}\riB
      \PrInf[\derB_2]
      \PrUn{\fa_1 \lor \fa_2}
      \PrLbl[\riB]\EM
      \PrBin{\fa_1 \lor \fa_2}
      \PrAss{\fa_1}\riA
      \PrInf[\derA_1]
      \PrUn\fb
      \PrAss{\fa_2}\riA
      \PrInf[\derA_2]
      \PrUn\fb
      \PrLbl[\riA]{\RuleName\lor{E}}
      \PrTri\fb
      \DisplayProof
    } \\ 
    \nextrule[\downarrow] \\ 
    \wholeline{
      \PrAss{\quant\forall\lva\lafa}\riB
      \PrInf[\derB_1]
      \PrUn{\fa_1 \lor \fa_2}
      \PrAss{\fa_1}\riA
      \PrInf[\derA_1]
      \PrUn\fb
      \PrAss{\fa_2}\riA
      \PrInf[\derA_2]
      \PrUn\fb
      \PrLbl[\riA]{\RuleName\lor{E}}
      \PrTri\fb
      \PrAss{\lnot\lafa\subst\lva\lvb}\riB
      \PrInf[\derB_2]
      \PrUn{\fa_1 \lor \fa_2}
      \PrAss{\fa_1}\riA
      \PrInf[\derA_1]
      \PrUn\fb
      \PrAss{\fa_2}\riA
      \PrInf[\derA_2]
      \PrUn\fb
      \PrLbl[\riA]{\RuleName\lor{E}}
      \PrTri\fb
      \PrLbl[\riB]\EM
      \PrBin\fb
      \DisplayProof
    } 
    \newline
    \header{\RedNamePerm[\limply]\EM}
    \wholeline{
      \PrAss{\quant\forall\lva\lafa}\riA
      \PrInf[\derA_1]
      \PrUn{\fa_1 \limply \fa_2}
      \PrAss{\lnot\lafa\subst\lva\lvb}\riA
      \PrInf[\derA_2]
      \PrUn{\fa_1 \limply \fa_2}
      \PrLbl[\riA]\EM
      \PrBin{\fa_1 \limply \fa_2}
      \PrAx{}
      \PrInf[\derB]
      \PrUn{\fa_1}
      \PrLbl{\RuleName\land{E}}
      \PrBin{\fa_2}
      \DisplayProof
    } \\ 
    \nextrule[\downarrow] \\ 
    \wholeline{
      \PrAss{\quant\forall\lva\lafa}\riA
      \PrInf[\derA_1]
      \PrUn{\fa_1 \limply \fa_2}
      \PrAx{}
      \PrInf[\derB]
      \PrUn{\fa_1}
      \PrLbl{\RuleName\limply{E}}
      \PrBin{\fa_2}
      \PrAss{\lnot\lafa\subst\lva\lvb}\riA
      \PrInf[\derA_2]
      \PrUn{\fa_1 \limply \fa_2}
      \PrAx{}
      \PrInf[\derB]
      \PrUn{\fa_1}
      \PrLbl{\RuleName\limply{E}}
      \PrBin{\fa_2}
      \PrLbl[\riA]\EM
      \PrBin{\fa_2}
      \DisplayProof
    }
  \end{rulelisting}
\end{figure}
\begin{figure}[h!]
  \caption{The permutative reductions of the \(\EM\) rule with the \(\RuleName\forall{E}\) and \(\RuleName\exists{E}\) rules.}
  \label{fig:em_perms_quantifiers}
  \begin{rulelisting}[\leadsto]
    \header{\RedNamePerm[\forall]\EM}
    \PrAss{\quant\forall\lva\lafa}\riA
    \PrInf[\derA_1]
    \PrUn{\quant\forall\lva \fa}
    \PrAss{\lnot\lafa\subst\lva\lvb}\riA
    \PrInf[\derA_2]
    \PrUn{\quant\forall\lva \fa}
    \PrLbl[\riA]\EM
    \PrBin{\quant\forall\lva \fa}
    \PrLbl{\RuleName\forall{E}}
    \PrUn{\fa\subst\lva\lta}
    \DisplayProof
    \nextrule
    \PrAss{\quant\forall\lva\lafa}\riA
    \PrInf[\derA_1]
    \PrUn{\quant\forall\lva \fa}
    \PrLbl{\RuleName\forall{E}}
    \PrUn{\fa\subst\lva\lta}
    \PrAss{\lnot\lafa\subst\lva\lvb}\riA
    \PrInf[\derA_2]
    \PrUn{\quant\forall\lva \fa}
    \PrLbl{\RuleName\forall{E}}
    \PrUn{\fa\subst\lva\lta}
    \PrLbl[\riA]\EM
    \PrBin{\fa\subst\lva\lta}
    \DisplayProof
    \newline
    \header{\RedNamePerm[\forall]\EM}
    \wholeline{
      \PrAss{\quant\forall\lva\lafa}\riA
      \PrInf[\derA_1]
      \PrUn{\quant\exists\lva \fa}
      \PrAss{\lnot\lafa\subst\lva\lvb}\riA
      \PrInf[\derA_2]
      \PrUn{\quant\exists\lva \fa}
      \PrLbl[\riA]\EM
      \PrBin{\quant\exists\lva \fa}
      \PrAss{\fa\subst\lva\lvb}
      \PrInf[\derB]
      \PrUn\fb
      \PrLbl{\RuleName\exists{E}}
      \PrBin\fb
      \DisplayProof
    } \\ 
    \nextrule[\downarrow] \\ 
    \wholeline{
      \PrAss{\quant\forall\lva\lafa}\riA
      \PrInf[\derA_1]
      \PrUn{\quant\exists\lva \fa}
      \PrAss{\fa\subst\lva\lvb}
      \PrInf[\derB]
      \PrUn\fb
      \PrLbl{\RuleName\exists{E}}
      \PrBin\fb
      \PrAss{\lnot\lafa\subst\lva\lvb}\riA
      \PrInf[\derA_2]
      \PrUn{\quant\exists\lva \fa}
      \PrAss{\fa\subst\lva\lvb}\riB
      \PrInf[\derB]
      \PrUn\fb
      \PrLbl[\riB]{\RuleName\exists{E}}
      \PrBin\fb
      \PrLbl[\riA]\EM
      \PrBin\fb
      \DisplayProof
    }
  \end{rulelisting}
\end{figure}

This reduction moves elimination rule instances from ``outside'' or ``below'' to ``inside'' or ``above'' an \(\EM\) rule instance. 
This is useful because an \(\EM\) rule instance may happen in between an introduction rule instance and an elimination rule instance, preventing a proper reduction from taking place.


\section{Witness Extraction}

In this section we prove the witness extraction theorem, that shows how we can extract witnesses from suitable classical derivations in \(\text{HA}+\EM\), as we can do for intuitionistic derivations in HA. 

In order to state and prove our results we need to keep track of free term variables in a derivations, 
since both the \(\RedNameInd\) and the \(\RedNameWitness\) reductions can only be performed when certain terms and formulas are closed. 

We need to define when a variable is free in a derivation. 
\begin{definition}[Free term variables]\label{def:free_term_variable}
  We say that a rule instance \(\riA\) \emph{binds} a term variable that occurs free in the derivation \(\derA\) of a premiss of \(\riA\) in the following cases:
  \begin{itemize}
    \item \(\riA\) is an instance of the \(\RuleName\forall{I}\) rule and binds the variable \(\lva\) in the formula occurrences in the derivation of its premiss: 
      \[
        \PrAx{}
        \PrInf[\derA]
        \PrUn\fa
        \PrLbl[\riA]{\RuleName\forall{I}}
        \PrUn{\quant\forall\lva \fa}
        \DisplayProof
      \]
    \item \(\riA\) is an instance of the \(\RuleName\exists{E}\) rule and binds the variable \(\lvb\) in the formula occurrences in the derivation of its rightmost premiss:
      \[
        \PrAx{\quant\exists\lva \fa}
        \PrAss{\fa\subst\lva\lvb}\riA
        \PrInf[\derA]
        \PrUn\fb
        \PrLbl[\riA]{\RuleName\exists{E}}
        \PrBin\fb
        \DisplayProof
      \]
    \item \(\riA\) is an instance of the \(\RuleNameIndE\) rule and binds the variable \(\lva\) in the formula occurrences in the derivation of its rightmost premiss:
      \[
        \PrAx{\fa\subst\lva{\num 0}}
        \PrAss\fa\riA
        \PrInf[\derA]
        \PrUn{\fa\subst\lva{\succ(\lva)}}
        \PrLbl[\riA]\RuleNameIndE
        \PrBin{\fa\subst\lva\lta}
        \DisplayProof
      \]
    \item \(\riA\) is an instance of the \(\EM\) rule and binds the variable \(\lvb\) in the formula occurrences in the derivation of its rightmost premiss: 
      \[
        \PrAss{\quant\forall\lva \lafa}\riA
        \PrInf
        \PrUn\fb
        \PrAss{\lnot\lafa\subst\lva\lvb}\riA
        \PrInf[\derA]
        \PrUn\fb
        \PrLbl[\riA]\EM
        \PrBin\fb
        \DisplayProof
      \]
  \end{itemize}
  We say that a term variable occurrence is \emph{free in a derivation} when the term variable occurs free in a formula occurrence in the derivation and is not bound by any rule instance. 
  A derivation is \emph{closed} if it has no free term variable nor open assumption. 
\end{definition}

Note that no reduction introduces free term variables in a derivation. 

Since a derivation is a tree, it makes sense to give the definition of branch. 
Principal branches are branches of a derivation that contains only major premisses of elimination and \(\EM\) rule instances. 
\begin{definition}[Principal branch]
  A \emph{branch} in a derivation \(\derA\) is a sequence of formula occurrences \(\foA_0, \dotsc, \foA_n\) in \(\derA\) such that:
  \begin{itemize}
    \item \(\foA_0\) is a top formula occurrence, that is, 
      \(\foA_0\) is either an assumption or the conclusion of an atomic axiom; 
    \item \(\foA_i\) and \(\foA_{i+1}\) are respectively a premiss and the conclusion of the same rule instance \(\riA_{i+1}\), for all \(0 \leq i < n\);
    \item \(\foA_n\) is the conclusion of \(\derA\).
  \end{itemize}
  A branch is \emph{principal} if, for all \(0 \leq i < n\) such that \(\riA_i\) is an elimination or \(\EM\) rule instance, \(\foA_i\) is the major (leftmost) premiss of \(\riA_i\). 
\end{definition}
We use the variables \(\brA, \brB\) for branches. 

In order to study the properties of normal proofs we only need to consider the structure of principal branches. 
A head-cut is the lowest point of a principal branch where a reduction is possible. 
\begin{definition}[Head-cut]
  The \emph{head-cut} of a principal branch \(\brA = \foA_1, \dotsc, \foA_n\) is the formula occurrence \(\foA_i\) with the maximum index \(i\) such that one of the following holds:
  \begin{itemize}
    \item 
      \(\foA_i\) is the conclusion of an elimination rule instance \(\riA_i\), 
      \(\foA_{i-1}\) is the major premiss of \(\riA_i\) and the conclusion of an introduction rule instance \(\riA_{i-1}\);
      when \(\riA_{i-1}\) is a \(\RuleName\land{I}\) rule instance 
      we also require that 
      \(\foA_{i-2}\) is an occurrence of the same formula as \(\foA_i\) (proper reductions);
    \item 
      \(\foA_i\) is the conclusion of an \(\RuleNameIndE\) rule instance \(\riA_i\), 
      whose main term is either \(\num 0\) or \(\succ(\ltb)\) for some term \(\ltb\) (\(\RedNameInd\) reduction); 
    \item 
      \(\foA_i\) is the conclusion of an \(\EM\) rule instance \(\riA\) 
      and either
      \(\foA_{i-1}\) is derived without using the assumption discharged by \(\riA\)
      or
      \(\foA_0\) is an occurrence of the universal assumption discharged by \(\riA\)
      and 
      \(\foA_1\) is the occurrence of a closed atomic formula (\(\RedNameWitness\) reduction);
    \item
      \(\foA_i\) is the conclusion of an elimination rule instance \(\riA\) 
      and 
      \(\foA_{i-1}\) is the conclusion of an \(\EM\) rule instance 
      (\(\RedNamePerm\EM\) reductions). 
  \end{itemize}
  If such an \(i\) exists we say that there is a head-cut along the branch \(\brA\). 
\end{definition}
This definition is the result of a analysis of the conditions that must be met in order to perform one of the reductions we have listed. 
In particular note how, in the condition given for the \(\RedNameWitness\) reduction, the fact that \(\foA_1\) is atomic implies that \(\foA_1\) is the conclusion of a \(\RuleName\forall{E}\) rule instance, as we assumed in defining \(\RedNameWitness\). 

We shall show that, with suitable assumptions,
we can perform the \(\RedNameWitness\) reduction as needed in order to extract a witness from a derivation. 
One of these assumptions is that the conclusion of the derivation is ``simple'' enough, as we define next.

\begin{definition}[Simple Formulas]\label{def:simple_formula}
  We say that a formula is \emph{simply existential} (resp. \emph{universal}) when it is \(\quant\exists\lva \lafa\) (resp. \(\quant\forall\lva \lafa\)) for some atomic formula \(\lafa\).

  We say that a formula is \emph{simple} when it is closed and atomic or simply existential. 
\end{definition}

In the following we consider the \(\EM\) and \(\RuleNameIndE\) rules to be neither elimination nor introduction rules and we give them special treatment. 
 
As we shall show later, 
principal branches beginning with an open assumption have particular structure in normal derivations: they begin with a sequence of elimination rule instances, followed by atomic and \(\EM\) rule instances and they end with introduction and \(\EM\) rule instances. Any of these parts may be missing.
\begin{definition}[Open normal form]
  \label{def:open_normal_form}
  A principal branch \(\foA_0, \dotsc, \foA_n\) is said to be in \emph{open normal form} when
  there exist three natural numbers \(n_E, n_A\) and \(n_I\)  such that \(n_E + n_A n_I = n \) and:
  \begin{itemize}
    \item 
      \(\foA_0\) is the occurrence of an open assumption in \(\Pi\),
    \item 
      \(\foA_i\) is the conclusion of an elimination rule instance 
      for \( 0 < i \leq n_E \), 
    \item 
      \(\foA_i\) is the conclusion of an atomic or \(\EM\) rule instance 
      for \( n_E < i \leq n_E + n_A \), 
    \item 
      \(\foA_{n_E + n_A + 1}\) is the conclusion of an introduction rule instance\footnotemark, 
    \item 
      \(\foA_i\) is the conclusion of an introduction or \(\EM\) rule instance 
      for \( n_E + n_A < i \leq n \), 
  \end{itemize}
  \(n_E, n_A\) and \(n_I\) are the number of elimination, atomic or \(\EM\), introduction or \(\EM\) rule instances, respectively.
  \footnotetext{Since \(\EM\) rule instances can appear intermingled with both atomic and introduction rule instances, in the definition we require that \(\foA_{n_E + n_A + 1}\) be the conclusion of an introduction rule, so that \(n_A\) and  \(n_I\) are uniquely determined. } 
\end{definition}

We can now prove our main result: 
closed normal derivations of simply existential formulas in \(\text{HA}+\EM\) can be reduced to derivations ending with an introduction rule instance. 
Derivations in HA have a similar property. 
The theorem we are going to prove holds for derivations that are concrete enough, namely they are:
self-contained (without open assumptions), concrete (without open term variables) and with an effective conclusion (a simply existential formula). 
The proof is split into several lemmas, whose proofs are given in \Cref{sec:proofs}.

In the first lemma we show that, in a derivation of a simply existential with no free term variables, a simply universal assumption is followed by a closed atomic formula. 
This will be used later to prove that we can perform the \(\RedNameWitness\) reduction on universal assumption of an \(\EM\) rule instance. 
\begin{lemma}\label{thm:forall_elim_closed}
  Let \(\brA = \foA_0, \dotsc, \foA_n\) be a principal branch in open normal form in a derivation \(\derA\) in \(\text{HA}+\EM\), with \(n_E, n_A\) and \(n_I\) defined as in \Cref{def:open_normal_form}.
  Let \(\fa_0, \dotsc, \fa_n\) be the formulas \(\foA_0, \dotsc, \foA_n\) are occurrences of. 
  Then the following statements hold: 
  \begin{enumerate}
    \item\label{thm:subformula_intro_branch_a}
      \(\fa_i\) is a non-atomic subformula of \(\fa_n\) for all \(n_E + n_A < i \leq n\);
    \item\label{thm:subformula_intro_branch}
      if some \(\foA_i\) is the conclusion of an introduction rule instance, 
      then \(\fa_i\) is a subformula of \(\fa_n\);
  \end{enumerate}
  Moreover assume that \(\fa_n\) is a simple formula. Then:
  \begin{enumerate}[resume]
    \item\label{thm:free_var_branch}
      if a term variable \(\lva\) is free in some \(\fa_i\), 
      then \(\lva\) is free in \(\derA\);
    \item\label{thm:forall_elim_branch}
      if some \(\fa_i\) is simply universal,
      then \(\foA_i\) is the premiss of a \(\RuleName\forall{E}\) rule instance;
    \item\label{thm:cut_branch}
      if \(\derA\) has no free term variables and \(\fa_i\) is simply universal,
      then \(\fa_{i+1}\) is a closed atomic formula. 
  \end{enumerate}
\end{lemma}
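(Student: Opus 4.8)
The plan is to prove the five claims in the listed order, exploiting the rigid shape of a branch in open normal form together with the subformula relation.

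\textbf{Statements \ref{thm:subformula_intro_branch_a} and \ref{thm:subformula_intro_branch}.} First I would look at the segment $n_E + n_A < i \le n$. Because $\brA$ is principal, at each step $\foA_i$ is the major premiss of $\riA_{i+1}$: if $\riA_{i+1}$ is an $\EM$ instance then both its premisses coincide with its conclusion, so $\fa_i = \fa_{i+1}$; if it is an introduction instance then, inspecting the five introduction rules, the premiss lying on the branch is always an immediate subformula of the conclusion, so $\fa_i$ is an immediate subformula of $\fa_{i+1}$. A downward induction from $i=n$ then yields that $\fa_i$ is a subformula of $\fa_n$ throughout the segment. Non-atomicity I would get by an upward induction from $i = n_E + n_A + 1$, which by \Cref{def:open_normal_form} is an introduction conclusion and hence carries a principal connective; any later $\foA_i$ is either again an introduction conclusion or an $\EM$ conclusion, and in the latter case $\fa_i = \fa_{i-1}$ is non-atomic by hypothesis. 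Statement \ref{thm:subformula_intro_branch} then follows, since in open normal form an introduction conclusion can only sit at a position $i > n_E + n_A$, which is already covered.

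\textbf{The simple case and Statement \ref{thm:forall_elim_branch}.} Assume now $\fa_n$ is simple, so its only non-atomic subformula is $\fa_n$ itself (either $\fa_n$ is closed atomic and has none, or $\fa_n = \quant\exists\lva\lafa$ and it is the only one). With statement \ref{thm:subformula_intro_branch_a} this forces $\fa_i = \fa_n$ on the whole introduction/EM segment, and in particular the only introduction on the branch is a single $\RuleName\exists{I}$ at position $n_E + n_A + 1$ (when $\fa_n$ is existential; otherwise the segment is empty). This observation drives the rest. For statement \ref{thm:forall_elim_branch}, let $\fa_i$ be simply universal. As $\quant\forall\lva\lafa$ is non-atomic and different from $\fa_n$, it is not a subformula of $\fa_n$, which by statements \ref{thm:subformula_intro_branch_a}--\ref{thm:subformula_intro_branch} rules out $i > n_E + n_A$ and also rules out $\foA_i$ being an introduction conclusion. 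I would then discard the atomic/EM segment by propagating downward: an atomic rule cannot take the non-atomic $\fa_i$ as a premiss, and an $\EM$ instance would make $\fa_{i+1}$ simply universal too, so the property would descend to $\foA_{n_E+n_A}$ and, through the mandatory $\RuleName\exists{I}$ at $n_E + n_A + 1$, exhibit a universal quantifier inside a subformula of $\fa_n$ --- impossible. Hence $i < n_E$, so $\riA_{i+1}$ is an elimination instance with $\foA_i$ as major premiss, and $\RuleName\forall{E}$ is the only elimination rule whose major premiss is universally quantified.

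\textbf{Statement \ref{thm:free_var_branch}.} This is the crux. Let $\lva$ be free in $\fa_i$; by \Cref{def:free_term_variable} I must check that no rule instance of $\derA$ binds this occurrence. Since $\foA_i$ lies on the branch, the unique downward path from it to the root is $\foA_i, \dotsc, \foA_n$, so any instance whose binding subderivation contains $\foA_i$ must be some $\riA_j$ with $j > i$ whose binding premiss is the branch-side premiss $\foA_{j-1}$. Among the binding rules, $\RuleName\exists{E}$ and $\EM$ bind a variable in their rightmost premiss, whereas the principal branch follows the leftmost one, so $\foA_i$ escapes their scope; there is no $\RuleNameIndE$ instance on the branch, because an $\RuleNameIndE$ conclusion matches none of the segments of \Cref{def:open_normal_form}; and there is no $\RuleName\forall{I}$ instance on the branch, because the simple-case observation shows the only introduction there is an $\RuleName\exists{I}$. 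Thus nothing binds the occurrence, and $\lva$ is free in $\derA$.

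\textbf{Statement \ref{thm:cut_branch} and the main difficulty.} With $\fa_i$ simply universal and $\derA$ free of term variables, statement \ref{thm:forall_elim_branch} makes $\riA_{i+1}$ a $\RuleName\forall{E}$ instance, whose conclusion $\fa_{i+1} = \lafa\subst\lva\lta$ is atomic; and by statement \ref{thm:free_var_branch} a variable free in $\fa_{i+1}$ would be free in $\derA$, against the hypothesis, so $\fa_{i+1}$ is closed. I expect the main obstacle to be statement \ref{thm:free_var_branch}: it depends on the exact geometry of binding scopes along a principal branch, and on the two structural facts that no $\RuleNameIndE$ and exactly one $\RuleName\exists{I}$ occur on the branch, which themselves rest on the open normal form and on the simplicity of $\fa_n$.
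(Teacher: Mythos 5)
Your proposal is correct and follows essentially the same route as the paper's own proof: the subformula property along the introduction/\(\EM\) segment established by induction, simplicity of \(\fa_n\) ruling out universally quantified subformulas (hence any \(\RuleName\forall{I}\) or binding instance on the branch), the same three-way case analysis (atomic, introduction, \(\EM\)-propagation) for statement (\ref{thm:forall_elim_branch}), and the combination of (\ref{thm:free_var_branch}) and (\ref{thm:forall_elim_branch}) for (\ref{thm:cut_branch}). The only differences are cosmetic: you induct on branch position rather than on \(n_I\), and your scope analysis for (\ref{thm:free_var_branch}) is somewhat more explicit than the paper's one-line observation that \(\RuleName\forall{I}\) is the only rule binding above a major premiss.
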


In the following lemma we show how we can apply the \(\RedNameWitness\) reduction. 
\begin{lemma}[\(\EM\) reduction]\label{lemma:em_open_branch}
  Let \(\derA\) be a derivation in \(\text{HA}+\EM\) with no free term variables. 
  Assume that \(\derA\) ends with an \(\EM\) rule instance \(\riA\) whose conclusion is an occurrence of a simple formula. 
  Assume that the derivation \(\derA'\) of the leftmost premiss of \(\riA\) has a principal branch \(\brA\) in open normal form.
  Then at least one of the following occurs:
  \begin{enumerate}
    \item \(\derA\) has a head-cut or a non-normal term along a principal branch,
    \item \(\derA\) has a principal branch in open normal form.
  \end{enumerate}
\end{lemma}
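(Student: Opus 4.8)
The plan is to extend the given principal branch $\brA$ of $\derA'$ through the concluding $\EM$ instance $\riA$ and then to read off which outcome holds by testing whether $\riA$ produces a head-cut. Write $\riA$ in its standard form, with its leftmost premiss derived by $\derA' = \derA_1$ from the universal assumption $\quant\forall\lva\lafa$ and its rightmost premiss derived by $\derA_2$ from the existential assumption $\lnot\lafa\subst\lva\lvb$. First I would record that $\derA_1$ has no free term variables: by \Cref{def:free_term_variable} the only variable $\riA$ binds is $\lvb$, and only in $\derA_2$, so any variable free in $\derA_1$ would already be free in $\derA$, contrary to hypothesis. Writing $\brA = \foA_0, \dotsc, \foA_n$, its last occurrence $\foA_n$ is the conclusion of $\derA_1$, hence the major (leftmost) premiss of $\riA$; appending the conclusion $\foA_{n+1}$ of $\riA$ therefore yields a principal branch $\brA' = \foA_0, \dotsc, \foA_n, \foA_{n+1}$ of $\derA$, since the branch passes through the major premiss of $\riA$.

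Next I would apply the definition of head-cut to the $\EM$ instance $\riA$ at index $n+1$. By that definition $\riA$ yields a head-cut at $\foA_{n+1}$ exactly when either $\foA_n$ is derived without using the universal assumption discharged by $\riA$, or $\foA_0$ is an occurrence of that assumption and $\foA_1$ is a closed atomic formula. If this condition holds, then $\derA$ has a head-cut along the principal branch $\brA'$, and we are in outcome~1.

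So suppose the head-cut condition fails; I would then prove $\brA'$ is in open normal form, giving outcome~2. The key step is to rule out $\foA_0$ being the universal assumption discharged by $\riA$: were it such an occurrence, then $\fa_0 = \quant\forall\lva\lafa$ is simply universal, $\fa_n = \fa$ is simple, and $\derA_1$ has no free term variables, so parts~\ref{thm:forall_elim_branch} and~\ref{thm:cut_branch} of \Cref{thm:forall_elim_closed} would make $\foA_0$ the premiss of a $\RuleName\forall{E}$ instance (hence $n \geq 1$) and $\foA_1$ a closed atomic formula, so the second disjunct of the head-cut condition would hold, a contradiction. Hence $\foA_0$ is not discharged by $\riA$; since it lies in $\derA_1$, it remains an open assumption of $\derA$. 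Finally, with $\brA$ in open normal form for parameters $n_E, n_A, n_I$ and $\foA_{n+1}$ the conclusion of an $\EM$ instance, I would extend the segmentation of \Cref{def:open_normal_form} by placing $\foA_{n+1}$ in the final (introduction-or-$\EM$) segment when $n_I \geq 1$, and, when that segment is empty, absorbing it into the (atomic-or-$\EM$) segment by replacing $n_A$ with $n_A + 1$; in both cases the requirement that the occurrence just past the atomic segment be an introduction is preserved (vacuously in the empty case), so $\brA'$ is in open normal form.

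The main obstacle I expect is the bookkeeping of this last step, namely checking that the open-normal-form segmentation still satisfies the uniqueness condition of the footnote to \Cref{def:open_normal_form} when the introduction segment is empty, so that absorbing $\foA_{n+1}$ into the atomic-or-$\EM$ segment is legitimate. The only genuinely delicate ingredient is the applicability of \Cref{thm:forall_elim_closed} to $\derA_1$, which rests on $\derA_1$ inheriting from $\derA$ both the absence of free term variables and a simple conclusion; once this is in place, the dichotomy between a head-cut at $\foA_{n+1}$ and membership of $\brA'$ in open normal form is exhaustive.
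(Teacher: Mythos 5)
Your proposal is correct and follows essentially the same route as the paper: extend \(\brA\) through the leftmost (major) premiss of \(\riA\) to a principal branch of \(\derA\), then split on whether \(\foA_0\) is the universal assumption discharged by \(\riA\), using part~(\ref{thm:cut_branch}) of \Cref{thm:forall_elim_closed} to obtain the closed atomic \(\foA_1\) that triggers the \(\RedNameWitness\) head-cut. You merely run the case split in the contrapositive direction and are somewhat more explicit than the paper about applying \Cref{thm:forall_elim_closed} to \(\derA'\) (where \(\brA\) is actually in open normal form) and about the segmentation bookkeeping when appending the \(\EM\) conclusion.
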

\begin{proof} 
  Let \(\brA = \foA_0, \dotsc, \foA_n\) and let \(\fa_0, \dotsc, \fa_n\) be the formulas \(\foA_0, \dotsc, \foA_n\) are occurrences of. 
  Let \(\foA\) be the conclusion of \(\derA\) and of the \(\EM\) rule instance \(\riA\):
  \[
    \PrAss[\foA_0]{\fa_0}{}
    \PrInfBr{\derA'}\brA
    \PrUn[\foA_n]{\fa_n}
    \PrAx{}
    \PrInf
    \PrUn{\fa_n}
    \PrLbl[\riA]\EM
    \PrBin[\foA]{\fa_n}
    \DisplayProof
  \]
  Note that we can extend \(\brA\) to \(\brB = \foA_0, \dotsc, \foA_n, \foA\) and \(\brB\) is a principal branch of \(\derA\).

  If \(\foA_0\) is not discharged by \(\riA\), then \(\brB\) is a principal branch in open normal form and thus we get the statement. 
  Otherwise, \(\foA_0\) is discharged by \(\riA\), meaning that \(\fa_0\) is the universal assumption of the \(\EM\) instance. 
  We can apply (\ref{thm:cut_branch}) of \Cref{thm:forall_elim_closed} to \(\brB\), since \(\foA_0\) is simply universal, \(\foA\) is simply existential and \(\derA\) has no free term variables. 

  Then \(\foA_1\) is a closed atomic formula and 
  we can perform the \(\RedNameWitness\) reduction, that is,
  there is a head-cut along the principal branch \(\brB\) of \(\derA\) and we can conclude. \qed

\end{proof}

The following lemma shows how to handle \(\RuleNameIndE\) rule instances. 
\begin{lemma}[Induction normalization]\label{thm:induction}
  Let \(\derA\) be a derivation in \(\mathrm{HA}+\EM\) ending with an \(\RuleNameIndE\) rule instance. 
  Then at least one of the following holds:
  \begin{enumerate}
    \item \(\derA\) has a head-cut or a non-normal term along a principal branch, 
    \item \(\derA\) contains a free term variable. 
  \end{enumerate}
\end{lemma}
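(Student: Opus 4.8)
The plan is to run a case analysis on the \emph{main term} $\lta$ of the final $\RuleNameIndE$ rule instance $\riA$. Since $\riA$ is the last rule of $\derA$, its conclusion $\fa\subst\lva\lta$ is the root of $\derA$ and hence lies at the bottom of every principal branch (every branch ends at the conclusion of $\derA$, and at least one principal branch exists, obtained by tracing up from the root through major premisses). First I would record the key bookkeeping fact: whenever $\lva$ occurs in $\fa$, the term $\lta$ appears as a subterm of this root occurrence, and because the root is the premiss of no further rule instance, no rule binds the variables occurring in it; so by \Cref{def:free_term_variable} every term variable occurring free in $\lta$ is free in $\derA$.

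Next I would dispose of the two cases that land in alternative~1. If $\lta$ is not in normal form, then the root occurrence $\fa\subst\lva\lta$ contains the non-normal term $\lta$, so $\derA$ has a non-normal term in a formula occurrence along a principal branch. If instead $\lta$ is in normal form and equals $\num 0$ or $\succ(\ltb)$ for some $\ltb$, then the main term meets exactly the side condition of the $\RedNameInd$ reduction, so by the induction clause of the head-cut definition the root ($i=n$) witnesses a head-cut along the branch. Either way alternative~1 holds.

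The remaining case is that $\lta$ is normal but is neither $\num 0$ nor of the form $\succ(\ltb)$. Here I would invoke the fact recorded earlier that every closed normal term is a numeral, and that numerals are exactly $\num 0$ and the successors $\succ(\ltb)$; consequently $\lta$ cannot be closed, so some term variable $\lvc$ occurs free in $\lta$. By the bookkeeping fact above, $\lvc$ is then free in $\derA$, which is alternative~2. These three possibilities for $\lta$ are exhaustive, so the disjunction follows.

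The hard part is not the trichotomy itself but ensuring that the variables of $\lta$ genuinely surface as free variables of $\derA$, which is why I lean on the conclusion lying below every binder. The one situation that needs separate care is the degenerate one where $\lva$ does not occur in $\fa$, so that $\lta$ leaves no trace in any formula occurrence and the argument of the third case breaks down. In that case $\fa\subst\lva\lta = \fa$ independently of the main term, the induction is vacuous, and the instance can be removed outright — equivalently, its main term may be taken to be $\num 0$, reducing it to $\derA_1$ and yielding a head-cut — so alternative~1 still applies. I would flag this explicitly and treat it first, so that the free-variable reasoning in the principal case can assume $\lva$ occurs in $\fa$.
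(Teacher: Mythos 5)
Your proposal is correct and follows essentially the same route as the paper's proof: a trichotomy on the main term \(\lta\) of the final \(\RuleNameIndE\) instance (non-normal term, reducible form \(\num 0\) or \(\succ(\ltb)\), or else an open term whose variables are free in \(\derA\)), using the fact that every closed normal term is a numeral. You are in fact slightly more careful than the paper, which argues directly from ``the conclusion \(\fa\subst\lva\lta\) has no free variable'' to ``\(\lta\) is closed'' and thereby glosses over the degenerate case where \(\lva\) does not occur in \(\fa\); your explicit treatment of that case (taking the main term to be \(\num 0\)) is a legitimate patch, not a deviation.
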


The following lemma can be thought of as a weak result on the structure of derivations. 
\begin{lemma}[Structure of Normal Form]\label{thm:em_reduction}
  Let \(\derA\) be a derivation in \(\mathrm{HA}+\EM\). Then at least one of the following holds:
  \begin{enumerate}
    \item \(\derA\) has a head-cut or a non-normal term along a principal branch; 
    \item \(\derA\) contains a free term variable; 
    \item \(\derA\) has a principal branch in open normal form; 
    \item \(\derA\) ends with an introduction rule instance;
    \item \(\derA\) is atomic (only atomic formulas occur in \(\derA\)); 
    \item \(\derA\) ends with an \(\EM\) instance and its conclusion is not simple.
  \end{enumerate}
\end{lemma}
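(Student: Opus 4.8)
The plan is to argue by induction on the height of $\derA$, with a case analysis on the last rule instance $\riA$ (the one whose conclusion is the conclusion of $\derA$). Two cases are immediate: if $\riA$ is an introduction rule we land in case~4, and if $\riA$ is an $\RuleNameIndE$ instance then \Cref{thm:induction} gives case~1 or case~2 directly. In every remaining case the strategy is the same: locate a principal branch witnessing one of the conditions, either by following the spine of major premisses upward or by invoking the induction hypothesis on a subderivation and then extending the branch it produces downward through $\riA$. Here I would use repeatedly that extending a principal branch through a major premiss preserves head-cuts and non-normal terms (case~1); that the connecting rules in play ($\riA$ as a logical elimination, as an atomic rule, as $\EM$, or as $\RuleName\exists{I}$) neither bind a variable occurring in the relevant premiss nor discharge an assumption there, so free variables (case~2) propagate as well; and that (\ref{thm:subformula_intro_branch_a}) of \Cref{thm:forall_elim_closed} forbids an introduction part on a branch whose conclusion is atomic.

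If $\riA$ is an elimination rule I would trace the spine of major premisses upward from the conclusion. Since each elimination has a unique major premiss, this trace is determined and, the derivation being finite, stops at the first occurrence $\foA_j$ that is not the conclusion of an elimination. An atomic rule or atomic axiom cannot sit there, because the major premiss of a logical elimination is never atomic, so $\foA_j$ is the conclusion of an introduction, of an $\EM$ instance, of an $\RuleNameIndE$ instance, or is an open assumption. In the first three situations I obtain, respectively, a proper reduction (case~1, completing the branch upward to a top formula, and for the $\RuleName\land{I}$ variant simply choosing to continue through the premiss of $\RuleName\land{I}$ that matches the conclusion of the $\RuleName\land{E}$ so the head-cut condition holds), a $\RedNamePerm\EM$ reduction (case~1), or case~1/2 by applying \Cref{thm:induction} to the subderivation ending at $\foA_j$. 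If instead $\foA_j$ is an open assumption, the entire branch from $\foA_j$ down to the conclusion consists of elimination conclusions, so it is in open normal form with $n_A=n_I=0$, giving case~3.

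If $\riA$ is an atomic rule, then either $\derA$ is entirely atomic, which is case~5, or some premiss of $\riA$ has a subderivation $\derB$ containing a non-atomic formula. I would apply the induction hypothesis to $\derB$, whose conclusion is atomic (and closed, unless $\derA$ has a free variable, i.e.\ case~2). Cases~4 and~6 are then impossible for $\derB$ (an introduction never concludes an atomic formula, and a closed atomic formula is simple), and case~5 is excluded by the choice of a non-atomic $\derB$; the surviving outcomes are cases~1,~2, and~3, each of which extends through $\riA$. For case~3 this uses (\ref{thm:subformula_intro_branch_a}) of \Cref{thm:forall_elim_closed}: the open normal form branch of $\derB$ has atomic conclusion and hence carries no introduction part, so appending the atomic rule $\riA$ keeps it in open normal form.

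The main obstacle is the case where $\riA$ is an $\EM$ instance. If its conclusion is not simple we are in case~6, and if $\derA$ has a free variable we are in case~2; so assume the conclusion is simple and $\derA$ is free-variable-free. To invoke \Cref{lemma:em_open_branch} I must exhibit a principal branch in open normal form in the subderivation $\derA'$ of the leftmost premiss, and I would extract one from the induction hypothesis on $\derA'$: its case~3 is exactly what is needed, cases~1 and~2 propagate, case~6 is impossible since $\derA'$ concludes the simple formula, and case~5 forces the (non-atomic) universal assumption to be absent from $\derA'$, so the leftmost premiss is derived without the discharged assumption and a $\RedNameWitness$ reduction already applies (case~1). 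The delicate outcome is case~4, where $\derA'$ ends with an introduction; as its conclusion is simple this must be an $\RuleName\exists{I}$, and I would recurse once more, applying the induction hypothesis to the subderivation of the $\RuleName\exists{I}$ premiss, whose conclusion is a closed atomic formula. There cases~4,~5 (again via the $\RedNameWitness$ argument) and~6 are excluded as above, so I obtain a head-cut, a free variable, or an open normal form branch which, having atomic conclusion and hence no introduction part, extends through the $\RuleName\exists{I}$ to the required open normal form branch of $\derA'$. Feeding this branch to \Cref{lemma:em_open_branch} yields case~1 or case~3 and closes the induction.
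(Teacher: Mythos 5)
Your proof is correct and follows essentially the same route as the paper's: induction on the derivation with a case split on the last rule, delegating the \(\RuleNameIndE\) case to \Cref{thm:induction}, the \(\EM\) case (via the intermediate \(\RuleName\exists{I}\) recursion) to \Cref{lemma:em_open_branch}, and using (\ref{thm:subformula_intro_branch_a}) of \Cref{thm:forall_elim_closed} to exclude introduction parts on branches with atomic conclusions. The only differences are organizational --- you trace the spine of major premisses directly in the elimination case where the paper applies the inductive hypothesis to the immediate subderivation and tabulates the outcomes, and you are slightly more explicit about closedness when ruling out case~6 --- but these do not change the substance of the argument.
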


Our main theorem is now an easy corollary of the previous lemma.
\begin{theorem}[Witness Extraction] \label{thm:witness_extraction}
  Let \(\derA\) be a derivation of a simple formula \(\fa\) in \(\mathrm{HA}+\EM\). Assume that:
  \begin{enumerate}
    \item \(\derA\) has no principal branch with a head-cut or a non-normal term; 
    \item \(\derA\) contains no free term variable; 
    \item \(\derA\) has no open assumptions;
  \end{enumerate}
  Then \(\derA\) is either atomic or ends with a \(\RuleName\exists{I}\) rule instance. 
  In particular, if \(\derA\) is closed, normal and \(\fa\) is simply existential then \(\derA\) ends with an introduction. 
\end{theorem}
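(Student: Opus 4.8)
The plan is to read off the theorem directly from the Structure of Normal Form lemma (\Cref{thm:em_reduction}) by discarding, one by one, the alternatives that the three hypotheses and the simplicity of \(\fa\) forbid. Applying \Cref{thm:em_reduction} to \(\derA\) yields six cases. I would immediately rule out the first by hypothesis (1) (no head-cut and no non-normal term along a principal branch), the second by hypothesis (2) (no free term variable), and the sixth because that case requires the conclusion to be non-simple, whereas \(\fa\) is simple by assumption.

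Next I would eliminate the third alternative, a principal branch in open normal form. By \Cref{def:open_normal_form} every such branch must begin with the occurrence of an open assumption \(\foA_0\); since hypothesis (3) says \(\derA\) has no open assumptions, no principal branch can be in open normal form, so this case is vacuous. What survives is therefore the disjunction of the fourth alternative (\(\derA\) ends with an introduction rule instance) and the fifth (\(\derA\) is atomic).

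It remains to sharpen this disjunction using the shape of \(\fa\), which by \Cref{def:simple_formula} is either closed atomic or simply existential. If \(\fa\) is closed atomic, then \(\derA\) cannot end with an introduction rule instance, since every introduction rule concludes a formula whose principal symbol is the introduced connective or quantifier while an atomic formula has none; hence only the fifth alternative survives and \(\derA\) is atomic. If instead \(\fa = \quant\exists\lva\lafa\), then \(\derA\) cannot be atomic, because its conclusion \(\fa\) is not atomic; hence \(\derA\) ends with an introduction rule instance, and since the introduced formula is the existential \(\fa\), that instance must be a \(\RuleName\exists{I}\) instance. In either case \(\derA\) is atomic or ends with a \(\RuleName\exists{I}\) instance, as claimed, and the closing ``in particular'' is simply the second horn specialized to simply existential \(\fa\) (where \(\RuleName\exists{I}\) is an introduction).

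I expect no genuine obstacle, since this is a corollary: the only points needing care are the observation that open normal form forces an open assumption at the foot of the branch (so hypothesis (3) kills the third case) and the purely syntactic matching of the principal connective of \(\fa\) against the available introduction rules. The real content has already been discharged in \Cref{thm:em_reduction}, whose proof must in turn rely on \Cref{lemma:em_open_branch}, \Cref{thm:induction} and \Cref{thm:forall_elim_closed}.
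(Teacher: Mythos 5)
Your proposal is correct and follows the same route as the paper: apply \Cref{thm:em_reduction}, discard the first, second, third and sixth alternatives using the three hypotheses and the simplicity of \(\fa\) (the paper leaves the elimination of the open-normal-form case implicit, whereas you spell out that such a branch must start at an open assumption), and then resolve the remaining disjunction by the syntactic shape of \(\fa\). Nothing further is needed.
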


Note that we did not prove that the normalization of a derivation is convergent: this shall be the focus of a future paper. 
For the moment we just assume that the reduction of a derivation must stop after a finite number of steps and produces a derivation without head-cuts.
Under this assumption, 
\Cref{thm:witness_extraction} shows that our proof reduction can extract a witness from the derivation of a closed formula \(\quant\exists\lva \lafa\), which can be found in the premise of the \(\RuleName\exists{I}\) rule instance at the end of the normalized derivation, by the definition of the \(\RuleName\exists{I}\) rule.

\bibliographystyle{plain} 
\bibliography{../realizability} 

\appendix
\section{Appendix}\label{sec:appendix}

\subsection{Formula occurrences and rule instances}\label{app:occurrences_instances}
A formula can occur more than once in a derivation. 
While these occurrences are clearly distinct in a tree-like representation, 
in order to avoid confusion when referring to them in the text special care must be taken. 
Thus we make a distinction between formulas and \emph{formula occurrences}, or simply occurrences, which we label with \(\foA, \foB, \foC\). 
In a derivation, formula occurrences are arranged following the patterns given by the inference rules. 
As with formulas, we distinguish between rules and \emph{rule instances}, or simply instances, which we label with \(\riA,\riB, \riC\). 
We write a derivation \(\derA\) as follows:
\[
  \PrAx{}
  \PrInf[\derA_1]
  \PrUn[\foB_1]\fb
  \PrAss\fc\riB
  \PrInf[\derA_2]
  \PrUn[\foB_2]\fb
  \PrLbl[\riA]{\text{rulename}}
  \PrBin[\foA]\fa
  \DisplayProof 
\]
The only occurrence of the formula \(\fa\) is labeled \(\foA\), while \(\fb\) occurs two times, with distinct labels \(\foB_1\) and \(\foB_2\). 
\(\foA\) is the \emph{conclusion} of an instance, labeled \(\riA\), of an inference rule named \(\text{rulename}\). 
\(\foA_1\) and \(\foA_2\) are the \emph{premisses} of \(\riA\). 
We also say that \(\foA\) is the \emph{conclusion} of the whole derivation \(\derA\). 
With \(\derA_1\) and \(\derA_2\) we denote two \emph{subderivations} (as in subtree) of \(\derA\). 
We distinguish subderivations by their conclusion, so we say that \(\derA_i\) is the (sub)derivation of \(\foA_i\) for \(i=1,2\).
By writing \(\PrAss\fc\riB \DisplayProof\) in square brackets above \(\derA_2\), 
we make explicit that \(\derA_2\) may contain occurrences of the assumption \(\fc\), which is discharged by some undisplayed rule instance \(\riB\).


\subsection{Equivalence between the \(\EM\) axiom and the \(\EM\) rule} \label{sec:em_axiom_rule_equiv}
We show how to derive the \(\EM\) rule from the \(\EM\) axiom and vice versa.
The \(\EM\) rule is derived by an \(\RuleName\lor{E}\) rule instance, whose major premiss is an instance of the \(\EM\) axiom and whose rightmost assumption is the major premiss of an \(\RuleName\exists{E}\) instance: 
\[
  \PrAx{}
  \PrLbl\EM
  \PrUn{(\quant\forall\lva \fa) \lor (\quant\exists\lva \lnot\fa)}
  \PrAss{\quant\forall\lva \fa}\riA
  \PrInf
  \PrUn\fc
  \PrAss{\quant\exists\lva \lnot\fa}\riA
  \PrAss{\lnot\fa\subst\lva\lvb}\riB
  \PrInf
  \PrUn\fc
  \PrLbl[\riB]{\RuleName\exists{E}}
  \PrBin\fc
  \PrLbl[\riA]{\RuleName\lor{E}}
  \PrTri\fc
  \DisplayProof
\]
On the other hand, the \(\EM\) axiom can be derived from the \(\EM\) rule by two \(\RuleName\lor{I}\) instances:
\[
  \PrAss{\quant\forall\lva \fa}\riA
  \PrLbl{\RuleName\lor{I}}
  \PrUn{(\quant\forall\lva \fa) \lor (\quant\exists\lva \lnot \fa)}
  \PrAss{\lnot\fa\subst\lva\lvb}\riA
  \PrLbl{\RuleName\exists{I}}
  \PrUn{\quant\exists\lva \lnot \fa}
  \PrLbl{\RuleName\lor{I}}
  \PrUn{(\quant\forall\lva \fa) \lor (\quant\exists\lva \lnot \fa)}
  \PrLbl[\riA]\EM
  \PrBin{(\quant\forall\lva \fa) \lor (\quant\exists\lva \lnot \fa)}
  \DisplayProof
\]
\subsection{Proofs}\label{sec:proofs}
In this subsection we list the proofs of the lemmas. 

Proof of \Cref{thm:forall_elim_closed}.
\begin{proof} 
  (\ref{thm:subformula_intro_branch}) follows immediately from  (\ref{thm:subformula_intro_branch_a}). 
  We need (\ref{thm:subformula_intro_branch}) to prove (\ref{thm:free_var_branch}) and (\ref{thm:forall_elim_branch}). 
  Then, by (\ref{thm:free_var_branch}) and (\ref{thm:forall_elim_branch}), we prove (\ref{thm:cut_branch}). 
  Here are the proofs. 
  \begin{enumerate}
    \item
      We proceed by induction on \(n_I\). 
      \begin{itemize}
        \item 
          If \(n_I = 0\), the thesis holds vacuously. 
        \item
          If \(n_I = 1\), we need to prove the statement just for \(i = n_E + n_A + 1 = n\) and thus \(\foA_n\) is the conclusion of an introduction rule instance by \Cref{def:open_normal_form}. 
          This means that \(\fa_n\) is not atomic. 
          Obviously it is also a subformula of itself so we are done. 
        \item
          Otherwise, let \(n_I > 1 \). 

          Consider the subderivation \(\derA'\) of \(\derA\) ending with \(\foA_{n-1}\) and its principal branch \(\brA' = \foA_0, \dotsc, \foA_{n - 1}\). 
          \(\brA'\) is in open normal form in \(\derA'\), with \(n_E' = n_E, n_A' = n_A\) and \(n_I' = n_I - 1\).
          Then, by inductive hypothesis, 
          for all \(n_E + n_A < i \leq n-1\),
          \(\fa_i\) is a non-atomic subformula of \(\fa_{n-1}\). 

          By \Cref{def:open_normal_form},
          \(\derA\) ends with an introduction or \(\EM\) rule instance \(\riA\). 
          In both cases \(\fa_{n-1}\) is a subformula of \(\fa_n\), since \(\foA_{n-1}\) is the premiss of \(\riA\) and \(\foA_n\) is its conclusion. 

          Thus for all \(n_E + n_A < i \leq n\), \(\fa_i\) is a subformula of \(\fa_n\). 
          Moreover since \(\fa_{n-1}\) is non-atomic then \(\fa_n\) is too. 
      \end{itemize}
    \item 
      If \(\foA_i\) is the conclusion of an introduction rule instance then \(n_E + n_A < i \leq n \) by \Cref{def:open_normal_form}. 
      Thus we conclude by (\ref{thm:subformula_intro_branch_a}). 
    \item 
      We show that \(\lva\) is not bound by any rule instance and thus is free in \(\derA\).
      The only rule that binds a variable above a major premiss, 
      and thus the only rule that can bind a variable in a principal branch, 
      is the \(\RuleName\forall{I}\) rule.
      Now assume that a \(\RuleName\forall{I}\) rule instance occur along \(\brA\) with conclusion \(\foA_j\). 
      By the previous statement (\ref{thm:subformula_intro_branch}), 
      \(\fa_j\) is a subformula of \(\fa_n\). 
      This yields a contradiction because by assumption \(\fa_n\) is simple and \(\fa_j\) is universally quantified since \(\foA_j\) is the conclusion of a \(\RuleName\forall{I}\) rule instance. 
    \item 
      We show that \(\foA_i\) is the premiss of a \(\RuleName\forall{E}\) rule instance because no other alternative is possible. 
      \begin{itemize}
        \item 
          \(\foA_i\) cannot be the premiss of an atomic rule instance, since we assumed that \(\fa_i\) is simply universal and thus not atomic.
        \item 
          \(\foA_i\) cannot be the premiss of an introduction rule instance, since in that case \(\fa_{i+1}\) is a subformula of \(\fa_n\) by (\ref{thm:subformula_intro_branch}). 
          Therefore a simply universal formula \(\fa_i\) is a subformula of a simple formula \(\fa_n\), which is a contradiction. 
        \item 
          Finally \(\foA_i\) cannot be the premiss of an \(\EM\) rule instance. 
          More precisely assume that \(\foA_i\) is followed by exactly \(j > 0\) instances of the \(\EM\) rule. 
          Then \(\foA_{i+j}\) is the conclusion of the last \(\EM\) rule instance \(\riA\) and \(\fa_{i+j}\) is the same formula as \(\fa_i\), in particular \(\fa_{i+j}\) is simply universal. 
          By definition of open normal form, \(\EM\) rule instances can only be followed by introduction, atomic or \(\EM\) rule instances. 
          Since we assumed that there are exactly \(j\) instances of the \(\EM\) rule, \(\foA_{i+j}\) is the premiss of either an atomic or introduction rule instance. 
          Then we are in one of the previous cases and we have a contradiction.
      \end{itemize}
      Then \(\foA_i\) can only be the premiss of an elimination rule instance and, 
      being \(\fa_i\) simply universal, 
      it must be an instance of the \(\RuleName\forall{E}\) rule. 
    \item 
      By (\ref{thm:forall_elim_branch}) we known that \(\foA_{i+1}\) is the conclusion of a \(\RuleName\forall{E}\) rule instance whose premiss is simply universal.
      Therefore \(\fa_{i+1}\) is an atomic formula.
      If \(\fa_{i+1}\) has a free term variable, \(\derA\) has too by (\ref{thm:free_var_branch}). 
      Since we assumed that \(\derA\) has no free term variable, \(\fa_{i+1}\) must be closed.
      Therefore \(\fa_{i+1}\) is a closed atomic formula. \qed
  \end{enumerate}
\end{proof}

Proof of \Cref{thm:induction}.
\begin{proof}
  Let \(\riA\) be the \(\RuleNameIndE\) rule instance \(\derA\) ends with
  and
  let its conclusion be an occurrence \(\foA\) of some formula \(\fa\subst\lva\lta\).
  If \(\foA\) has a free term variable \(\lva\) then \(\lva\) is free in \(\derA\) too and we are done. 
  If \(\lta\) is not normal then all principal branches\footnote{Since all branches of \(\derA\) end with its conclusion.} in \(\derA\) have a non-normal term. 
  Otherwise \(\lta\) is a closed normal term and thus 
  it is either \(\num 0\) or \( \succ(\ltb)\) for some term \(\ltb\) and e we can apply the \(\RedNameInd\) reduction, meaning that any principal branch of \(\derA\) has a head cut. \qed
\end{proof}

Proof of \Cref{thm:em_reduction}.
\begin{proof} 
  The proof is by induction on the structure of the derivation \(\derA\), that is, we assume that the statement holds for all subderivations of \(\derA\) and we prove that it holds for the whole derivation. 

  Let \(\riA\) be the last rule instance in \(\derA\). 
  If \(\riA\) in an introduction rule instance the statement is satisfied and we are done. 

  If \(\riA\) is an \(\RuleNameIndE\) rule instance then we get the statement by applying \Cref{thm:induction} to \(\derA\).

  Then we only need to understand what happens when \(\riA\) is an elimination, an atomic or an \(\EM\) rule instance. 
  Note that the only case in which \(\riA\) has no premisses is when \(\riA\) is an atomic axiom.
  If this happens then \(\derA\) is atomic (it is just the conclusion of \(\riA\)) and the statement is satisfied.

  Otherwise \(\riA\) has one or more (when \(\riA\) is atomic) major premisses. 
  Let \(\derA'\) be the derivation of any one of the major premisses of \(\riA\).
  Any principal branch \(\brA\) of \(\derA'\) can be extended to a branch \(\brB\) of \(\derA\), by appending the conclusion of \(\derA\). 
  \(\brB\) is principal too because \(\derA'\) is the subderivation of a major premiss of \(\riA\). 
  We shall use this fact often in the following. 


  By inductive hypothesis \(\derA'\) satisfies the statement, 
  so we proceed by considering all the possible cases. 
  \begin{enumerate}
    \item\label{case:cut} 
      \(\derA'\) has  a head-cut or a non-normal term along a principal branch \(\brA\). 
      As we noted \(\brA\) can be extended to a principal branch of \(\derA\) with the same head-cut or non-normal term,
      so \(\derA\) satisfies the statement and we are done. 
    \item\label{case:free_var} 
      \(\derA'\) contains a free term variable. 

      There are four rules that can bind term variables: the \(\RuleName\forall{I}, \RuleName\exists{E}, \RuleNameIndE\) and \(\EM\) rules. 
      Since the cases when \(\riA\) is an introduction or \(\RuleNameIndE\) rule instance have been taken care of already and 
      since the \(\RuleName\exists{E}\) and \(\EM\) rules can only bind term variables in the derivation of its minor premiss, 
      any free term variable in \(\derA'\) is free in \(\derA\) too. 
      Thus \(\derA\) satisfies the statement. 

    \item\label{case:open_ass} 
      \(\derA'\) has a principal branch \(\brA = \foA_0, \dotsc, \foA_n\) in open normal form. 
      Let \(\brB\) be the principal branch of \(\derA\) extending \(\brA\). 
      \[
        \PrAss[\foA_0]{\fa_0}{}
        \PrInfBr{\derA'}\brA
        \PrUn[\foA_n]{\fa_n}{}
        \PrAx\dotso
        \PrLbl[\riA]{\EM|\RuleName\ast{E}|\mathcal{A}}
        \PrBin\fa
        \DisplayProof
      \]
      Note that elimination rule instances do not discharge assumptions in their leftmost subderivation and atomic rule instances do not discharge assumptions in general. 

      Then, when \(\riA\) is either an elimination or atomic rule instance, 
      the assumption \(\foA_0\) is still open in \(\derA\) and 
      we have the following cases depending on how which rule \(\foA_n\) is the conclusion of. 
      Note that since \(\brA\) is in open normal form, \(\foA_n\) cannot be an \(\RuleNameIndE\) instance. Thus we have the following cases: 
      \begin{center}
        \begin{tabular}{cc|c|c|c|c|c|}
          \cline{3-6}
          & &\multicolumn{4}{|c|}{\(\foA_n\) is the conclusion of} \\ 
          \cline{3-6}
          & & ELIM & ATOM & INTRO & \(\EM\) \\ 
          \hline
          \multicolumn{1}{|c|}{\multirow{2}{*}{\(\riA\)}} & ELIM & EXT & NO & CUT & PERM \\
          \cline{2-6}
          \multicolumn{1}{|c|}{} & ATOM & EXT & EXT & NO & NO/EXT \\
          \hline
        \end{tabular}
      \end{center}

      \begin{itemize}
        \item[EXT] 
          \(\brB\) begins with the open assumption \(\foA_0\) followed by elimination and atomic rule instances, so \(\derA\) satisfies the statement; 
        \item[NO] 
          this is never the case since, in a principal branch, 
          an elimination (risp. atomic) rule instance cannot follow an atomic (risp. introduction) rule instance, because
          the major premiss (risp. conclusion) of an elimination (risp. introduction) rule instance is not atomic and thus cannot be the conclusion (risp. premiss) of an atomic rule instance;
        \item[CUT] 
          \(\riA\) is an elimination rule instance and its major premiss is the conclusion of an introduction rule instance, 
          thus \(\brB\) ends with a head-cut and again \(\derA\) satisfies the statement;
        \item[PERM] 
          when a major premiss of \(\riA\) is the conclusion of an \(\EM\) rule instance we can apply the \(\RedNamePerm\EM\) reduction, thus \(\brB\) ends with a head-cut and \(\derA\) satisfies the statement;
        \item[NO/EXT] 
          we have two cases depending on the \(n_I\) of \(\brA\):
          \begin{itemize}
            \item[\(n_I > 0\)] 
              then, by (\ref{thm:subformula_intro_branch_a}) of \Cref{thm:forall_elim_closed}, we have that \(\foA_n\) is an occurrence of a non atomic formula and thus \(\riA\) cannot be an atomic rule instance;
            \item[\(n_I = 0\)]
              in this case \(\brB\) is in open normal form and thus \(\derA\) satisfies the statement.
          \end{itemize}


      \end{itemize}
      On the other hand, if \(\riA\) is an \(\EM\) rule instance then we can apply \Cref{lemma:em_open_branch} to \(\derA\) (since we can safely assume that \(\derA\) contains no free term variables) and we get the statement. 
    \item \(\derA'\) ends with an introduction rule instance \(\riB\).
      Then the conclusion \(\foB\) of \(\riB\) cannot be atomic and since it is a premiss of \(\riA\), \(\riA\) cannot be atomic either. 
      Therefore \(\riA\) must be either an elimination or an \(\EM\) rule instance.

      If \(\riA\) is an elimination then there is a head-cut along a principal branch going through \(\foB\) so \(\derA\) satisfies the statement. 
      \[
        \PrAx{}
        \PrInf
        \PrLbl[\riB]{\RuleName{?}{I}}
        \PrUn[\foB]\fb
        \PrAx\dotso
        \PrLbl[\riA]{\RuleName{?}{E}}
        \PrBin[\foA]\fa
        \DisplayProof
      \]
      If \(\riA\) is an \(\EM\) rule instance then \(\foA\) and \(\foB\) are both occurrences of the formula \(\fa\).
      If \(\fa\) is not simple then \(\derA\) satisfies the statement. 
      Otherwise we assume that \(\fa\) is simple: 
      since \(\fa\) cannot be atomic 
      (it occurs as the conclusion of the introduction rule instance \(\riB\))
      \(\fa\) must be \(\quant\exists\lva \lafa\) for some atomic formula \(\lafa\)
      and
      \(\riB\) must be an \(\RuleName\exists{I}\) rule instance. 
      Then we are in the following situation:
      \[ 
        \PrAx{}
        \PrInf[\derA'']
        \PrUn\fa
        \PrLbl[\riB]{\RuleName\exists{I}}
        \PrUn[\foB]{\quant\exists\lva\lafa}
        \PrAx{}
        \PrInf
        \PrUn{\quant\exists\lva\lafa}
        \PrLbl[\riA]\EM
        \PrBin[\foA]{\quant\exists\lva\lafa}
        \DisplayProof
      \]
      where \(\derA''\) is the derivation of the premiss of \(\riB\). 
      Again note that principal branches of \(\derA''\) extend to principal branches of \(\derA'\) by appending \(\foB\). 

      By inductive hypothesis \(\derA''\) satisfies the statement, 
      so we proceed by considering all the possible cases. 
      \begin{enumerate}
        \item 
          \(\derA''\) has a head-cut or a non-normal term along a principal branch.
          Then \(\derA'\) does too and we are in the previously solved case labeled \ref{case:cut}. 
        \item \(\derA''\) contains a free term variable. 
          Since \(\RuleName\exists{I}\) does not bind free term variables, 
          \(\derA'\) does too and we are in the previously solved case labeled \ref{case:free_var}. 
        \item
          \(\derA''\) has a principal branch \(\brA\) in open normal form. 
          Since \(\RuleName\exists{I}\) does not discharge open assumptions,
          \(\derA'\) does too and we are in the previously solved case labeled \ref{case:open_ass}. 
        \item 
          \(\derA''\) ends with an introduction rule instance. 
          This cannot happen because \(\foB\) is an atomic formula occurrence. 
        \item 
          \(\derA''\) is atomic. 
          The assumption discharged by \(\riA\) from its leftmost subderivation is not atomic, 
          thus it cannot occur in \(\derA''\) since \(\derA''\) is atomic. 
          Therefore we can apply the \(\RedNameWitness\) reduction meaning that 
          there is a head-cut at the end of the principal branches of \(\derA'\)
          and we are in the previously solved case labeled \ref{case:cut}. 
        \item \(\derA''\) ends with an \(\EM\) instance and its conclusion is not simple.
          This cannot happen because we assumed that \(\fa\) is simple and \(\foB\) is an occurrence of \(\fa\).
      \end{enumerate}

    \item \(\derA'\) is atomic. 
      In this case one of major premisses of \(\riA\) is atomic, so \(\riA\) cannot be an elimination rule instance and must be either an \(\EM\) or atomic rule instance. 
      \begin{itemize}
        \item
          If \(\riA\) is an \(\EM\) rule instance then it is redundant: 
          the assumption discharged by \(\riA\) from its leftmost subderivation is not atomic, 
          thus it cannot occur in \(\derA'\) since \(\derA'\) is atomic whose premisses are atomic formula occurrences. 
          Therefore we can apply the \(\RedNameWitness\) reduction to \(\riA\), meaning that 
          there is a head-cut at the end of the principal branches of \(\derA\) and thus \(\derA\) satisfies the statement. 
        \item
          Otherwise, if \(\riA\) is an atomic rule instance, consider the other subderivations of its major premisses. 
          If they are all atomic then \(\derA\) is atomic too and it satisfies the statement. 
          Otherwise there is a major premiss of \(\riA\) with a non atomic derivation \(\derA''\). 
          Then one of the other cases applies with \(\derA''\) in place of \(\derA'\). 
      \end{itemize}

    \item \(\derA'\) ends with an \(\EM\) rule instance \(\riB\) and its conclusion is not simple.
      \(\riA\) cannot be an atomic rule instance since one of its premisses is the conclusion of \(\riB\) which is not simple and thus not atomic. 
      If \(\riA\) is an elimination rule instance we can apply the \(\RedNamePerm\EM\) reduction to \(\riA\) and \(\riB\). 
      Thus there is a head-cut at the end of the principal branches of \(\derA\), 
      and \(\derA\) satisfies the statement. 
      Otherwise, if \(\riA\) is an \(\EM\) rule instance then the conclusions of \(\derA\) and \(\derA'\) are occurrences of the same non-simple formula. 
      Therefore \(\derA\) again satisfies the statement. 
  \end{enumerate}
  Since we exhausted all the possible cases we are done. \qed
\end{proof}

Proof of \Cref{thm:witness_extraction}. 
\begin{proof}
  The hypotheses rule out most of the cases considered by \Cref{thm:em_reduction}. 
  The only possible cases are:
  \begin{enumerate}
    \item \(\derA\) ends with an introduction rule instance,
    \item \(\derA\) is atomic.
  \end{enumerate}
  Since \(\fa\) is simple it is either an atomic or existentially quantified formula. 
  If \(\fa\) is atomic, \(\derA\) cannot end with an introduction rule instance and thus \(\derA\) must be atomic.
  Otherwise, if \(\fa\) is existentially quantified, \(\derA\) cannot end with an atomic rule instance and thus 
  \(\derA\) must end with an introduction rule instance which can only be  a \(\RuleName\exists{I}\) rule instance. \qed
\end{proof}

\begin{thesis}

  \subsection{Permutative reductions}\label{sec:permutative_reductions} 
  We saw how the proper reductions are performed when the conclusion of an introduction rule instance is the major premiss of a elimination rule instance. 
  The situation becomes less straightforward when the conclusion of an introduction rule instance \(\riA\) is a minor premiss of an \(\RuleName\lor{E}\) or \(\RuleName\exists{E}\) rule instance \(\riC\) whose conclusion is in turn the major premiss of an elimination rule instance \(\riB\). 
  Also in this case the formula introduced by \(\riA\) is eliminated by \(\riB\), but we cannot apply a proper reduction since \(\riC\) is in the way. 
  What we can do is to rearrange the derivation by moving \(\riB\) above (or ``inside'') \(\riC\), so that \(\riB\) is immediately below \(\riA\) and we can apply the suitable proper reduction. 
  Therefore we have two \emph{permutative reductions}, depending on whether \(\riC\) is an instance of the \(\RuleName\lor{E}\) or the \(\RuleName\exists{E}\) rule.
  Note that repeated application of the permutative reductions allows us to apply a proper reduction even when there is more than one instance of the \(\RuleName\lor{E}\) or the \(\RuleName\exists{E}\) between \(\riA\) and \(\riB\).  
  Thus they can be thought of as auxiliary reductions that can eventually enable a suitable proper reduction. 
  They are listed in \Cref{fig:permutative_reductions}. 

  \begin{figure}[h]
    \caption{The permutative reductions.}
    \label{fig:permutative_reductions}
    \begin{rulelisting}[]
      \header{\RedNamePerm\lor}
      &
      \PrAx{}
      \PrInf[\derA_1]
      \PrUn{\fa_1 \lor \fa_2}
      \PrAss{\fa_1}\riC
      \PrInf[\derA_2]
      \PrUn\fb
      \PrAss{\fa_2}\riC
      \PrInf[\derA_3]
      \PrUn\fb
      \PrLbl[\riC]{\RuleName\lor{E}}
      \PrTri\fb
      \PrAx{}
      \PrInf[\derA_4]
      \PrLbl[\riB]{\RuleName\ast{E}}
      \PrBin\fc
      \DisplayProof 
      &
      \\
      & \downarrow &
      \\
      &
      \PrAx{}
      \PrInf[\derA_1]
      \PrUn{\fa_1 \lor \fa_2}
      \PrAss{\fa_1}\riC
      \PrInf[\derA_2]
      \PrUn\fb
      \PrAx{}
      \PrInf[\derA_4]
      \PrLbl[\riB]{\RuleName\ast{E}}
      \PrBin\fc
      \PrAss{\fa_2}\riC
      \PrInf[\derA_3]
      \PrUn\fb
      \PrAx{}
      \PrInf[\derA_4]
      \PrLbl[\riB]{\RuleName\ast{E}}
      \PrBin\fc
      \PrLbl[\riC]{\RuleName\lor{E}}
      \PrTri\fc
      \DisplayProof
      &
      \newline
      \header{\RedNamePerm\exists} 
      &
      \PrAx{}
      \PrInf[\derA_1]
      \PrUn{\quant\exists\lva\fa}
      \PrAss{\fa\subst\lva\lvb}\riC
      \PrInf[\derA_2]
      \PrUn\fb
      \PrLbl[\riC]{\RuleName\exists{E}}
      \PrBin\fb
      \PrAx{}
      \PrInf[\derA_4]
      \PrLbl[\riB]{\RuleName\ast{E}}
      \PrBin\fc
      \DisplayProof 
      &
      \\ & \downarrow & \\
         &
      \PrAx{}
      \PrInf[\derA_1]
      \PrUn{\quant\exists\lva\fa}
      \PrAss{\fa\subst\lva\lvb}\riC
      \PrInf[\derA_2]
      \PrUn\fb
      \PrAx{}
      \PrInf[\derA_4]
      \PrLbl[\riB]{\RuleName\ast{E}}
      \PrBin\fc
      \PrLbl[\riC]{\RuleName\exists{E}}
      \PrBin\fc
      \DisplayProof
      &
      \newline
      &
      \begin{tabular}{l}
        \(\ast\) is any logical connective: \(\land, \lor, \limply, \forall, \exists\); \\
        \(\derA_4\) stands for the subderivations of the minor premisses (if any) of \(\riB\).
      \end{tabular}
      &
    \end{rulelisting}

  \end{figure}

  \subsection{Immediate simplifications} 
  Consider a different kind of avoidable complexity in derivation: 
  an instance \(\riA\) of the \(\RuleName\lor{E}\) or the \(\RuleName\exists{E}\) rule such that the one of its minor premisses \(\foA\) is derived without using the assumption discharged by \(\riA\). 
  More precisely this means that no occurrence of the assumption discharged by \(\riA\) appears in the subderivation of \(\foA\). 
  Whenever this is the case we say that \(\riA\) is \emph{redundant} since we do not need the assumptions it provides in order to prove its conclusion. 
  There are two reductions, called \emph{immediate simplifications}, depending on whether \(\riA\) is an instance of the \(\RuleName\lor{E}\) or the \(\RuleName\exists{E}\) rule. 
  They are listed in \Cref{fig:immediate_simplifications}. 

  \begin{figure}[h]
    \caption{The immediate simplifications.}
    \label{fig:immediate_simplifications}
    \begin{rulelisting}[\leadsto]
      \header{\RedNameSimpl\lor}
      \PrAx{\fa_1 \lor \fa_2}
      \PrAss{\fa_1}\riA
      \PrInf[\derA_1]
      \PrUn\fc
      \PrAss{\fa_2}\riA
      \PrInf[\derA_2]
      \PrUn\fc
      \PrLbl[\riA]{\RuleName\lor{E}}
      \PrTri\fc
      \DisplayProof
      \nextrule
      \PrAx{}
      \PrInf[\derA_i]
      \PrUn\fc
      \DisplayProof 
      \\
      \header{\text{ if \(\derA_i\) does not depend on \(\fa_i\)}}
      \hline
      \header{\RedNameSimpl\exists} 
      \PrAx{\quant\exists\lva \fa}
      \PrAss{\fa\subst\lva\lvb}\riA
      \PrInf[\derA]
      \PrUn\fb
      \PrLbl[\riA]{\RuleName\exists{E}}
      \PrBin\fb
      \DisplayProof
      \nextrule
      \PrAx{}
      \PrInf[\derA]
      \PrUn\fc
      \DisplayProof 
    \end{rulelisting}
  \end{figure}

  \subsection{Normal derivations}
  When a derivation cannot be reduced by any of the reductions we listed, it is said to be a \emph{normal} derivation. 
  If we make the ``cannot be reduced'' part explicit, we get the following definition. 
  \begin{definition}
    A derivation is \emph{directly reducible} when one of the following holds: 
    \begin{itemize}
      \item it ends with an elimination rule instance whose major premiss is the conclusion of an introduction rule instance (proper reduction),
      \item it ends with an elimination rule instance whose major premiss is the conclusion of an \(\RuleName\lor{E}\) or \(\RuleName\lor{E}\) rule instance (permutative reduction), 
      \item it ends with an \(\RuleNameIndE\) rule instance whose main\fixme{define} term is either \(\num 0\) or \(\succ(\ltb)\) for some \(\ltb\) (\(\RedNameInd\) reduction), 
      \item it ends with an \(\EM\) rule instance \(\riA\), 
        in the subderivation of the leftmost premiss of \(\riA\)
        one occurrence of the assumption discharged by \(\riA\) is the premiss of a \(\RuleName\forall{E}\) rule instance \(\riB\) and
        the conclusion of \(\riB\) is an occurrence of a closed formula (\(\RedNameWitness\)), 
      \item it ends with an elimination or atomic rule instance \(\riA\) and one premiss of \(\riA\) is the conclusion of a \(\EM\) rule instance (\(\EM\) permutative reduction), 
      \item it ends with an \(\EM\) rule instance \(\riA\) and one of the premisses of \(\riA\) is derived without using the assumption discharged by \(\riA\) (\(\EM\) immediate simplification). 
    \end{itemize}
    A derivation is \emph{reducible} when it or one of its subderivations is directly reducible.
    A derivation is \emph{normal} when it is not reducible. 
  \end{definition}
  Since the reductions are meant to remove detours and simplify a derivation, we expect a normal derivation to have a particularly simple structure.

  \subsection{Witness reduction in two steps} \fixme{rewrite}
  Instead of giving the single reduction \(\RedNameWitness\), 
  we can split it in two distinct reductions, 
  one that looks for counterexamples and eliminates occurrences of the open assumptions of the \(\EM\) rule and 
  one that eliminates instances of the \(\EM\) rule when their conclusion can be derived without the universal or existential assumption. 


  More precisely consider an instance \(\riA\) of the \(\EM\) rule for the quantifier-free formula \(\fa\):
  \[
    \PrAss{\quant\forall\lva\fa}\riA
    \PrInf[\derA_1]
    \PrUn\fc
    \PrAss{\lnot\fa\subst\lva\lvb}\riA
    \PrInf[\derA_2]
    \PrUn\fc
    \PrLbl[\riA]\EM
    \PrBin\fc
    \DisplayProof
  \]
  Let \(\foA\) be an occurrence of the assumption \(\quant\forall\lva\fa\) discharged by \(\riA\) in \(\derA_1\)
  In order to be able to perform the reduction we assume the following:
  \begin{itemize}
    \item \(\foA\) is the premiss of a \(\RuleName\forall{E}\) instance \(\riB\), 
    \item the conclusion of \(\riB\) is the occurrence of a closed formula. 
  \end{itemize}
  Let \(\foB\) be the conclusion of \(\riB\). 
  \(\foB\) is an occurrence of the closed quantifier-free formula \(\fa\subst\lva\lta\) for some term \(\lta\). 
  Since closed quantifier-free formula are decidable, 
  in the reduction we can distinguish two cases,  
  depending on whether \(\fa\subst\lva\lta\) is true or false. 
  \begin{itemize}
    \item \(\fa\subst\lva\lta\) is true. 

      Let \(\derA_1\) be a derivation of \(\fa\subst\lva\lta\) and 
      replace \(\riB\) with \(\derA_1\):
      \[ 
        \PrAss{\quant\forall\lva\fa}\riA
        \PrLbl{\RuleName\forall{E}}
        \PrUn{\fa\subst\lva\lta}
        \PrInf
        \DisplayProof \quad \leadsto \quad 
        \PrAx{}
        \PrInf[\derA_1]
        \PrUn{\fa\subst\lva\lta}
        \PrInf
        \DisplayProof
      \] 

    \item otherwise \(\lnot\fa\subst\lva\lta\) is true.

      Let \(\derA_2\) be a derivation of \(\lnot\fa\subst\lva\lta\) and 
      replace all the occurrences of the assumption \(\lnot\fa\subst\lva\lta\) discharged by \(\riA\) in the derivation of its rightmost premiss with \(\derA_2\):
      \[ 
        \PrAss{\lnot\fa\subst\lva\lvb}\riA
        \PrInf
        \DisplayProof \quad \leadsto \quad
        \PrAx{}
        \PrInf[\derA_2]
        \PrUn{\lnot\fa\subst\lva\lta}
        \PrInf
        \DisplayProof
      \] 
  \end{itemize}
  We denote this reduction as \(\RedNameWitness\). 

  Whenever this reduction can be applied it removes one or more occurrences of one of the assumptions discharged by \(\riA\). 
  If there are no more occurrences of such assumptions in either \(\derA_1\) or \(\derA_2\) then \(\riA\) is redundant and can be deleted by \(\RedNameSimpl\EM\). 

  \subsection{Immediate simplification}
  Redundant \(\EM\) rule instances can be defined and reduced in the same way as redundant \(\RuleName\lor{E}\) instances. 
  Consider an \(\EM\) rule instance \(\riA\) such that one of its premisses is derived without using the assumption discharged by \(\riA\).
  Then we can reduce as follows:
  \[
    \PrAss{\quant\forall\lva \fa}\riA
    \PrInf[\derA_1]
    \PrUn\fc
    \PrAss{\lnot \fa\subst\lva\lvb}\riA
    \PrInf[\derA_2]
    \PrUn\fc
    \PrLbl[\riA]\EM
    \PrBin\fc
    \DisplayProof
    \leadsto 
    \PrAx{} 
    \PrInf[\Sigma_i] 
    \PrUn\fc 
    \DisplayProof 
  \]
  depending on whether it is \(\derA_1\) or \(\derA_2\) that contains no occurrence of the assumption discharged by \(\riA\). 
  We denote this reduction as \(\RedNameSimpl\EM\). 

  \newcommand\pathA{\mathfrak{a}}
  \newcommand\pathB{\mathfrak{b}}
  \begin{definition}[Path]
    A \emph{path} in a derivation \(\Pi\) is a sequence \(\foA_1, \dotsc, \foA_n\) of formula occurrences in \(\Pi\) such that:
    \begin{itemize}
      \item \(\foA_1\) is the conclusion of an atomic axiom instance or an assumption that is not discharged by a \(\RuleName\lor{E}\) or \(\RuleName\exists{E}\) rule instance; 
      \item for all \(i < n\), \(\foA_i\) and \(\foA_{i+1}\) are respectively a premise and the conclusion of the same rule instance, except when \(\foA_i\) is the major premise of an instance \(\riA\) of a \(\RuleName\lor{E}\) or \(\RuleName\exists{E}\) rule: 
        in this case \(\foA_{i+1}\) is an occurrence of an assumption discharged by the rule instance;
      \item \(\foA_n\) is either the conclusion of \(\Pi\) or the minor premise of a \(\RuleName\limply{E}\) and no \(\foA_i\) with \(i < n\) is such a minor premiss.
    \end{itemize}
  \end{definition} 

  \begin{definition}
    A path \(\foA_1, \dotsc, \foA_n\) of occurrences of formulas \(\fa_1, \dotsc,\fa_n\)
    \begin{itemize}
      \item 
        there are \( 1 \leq m_1 \leq m_2 \leq n\) such that 
        \begin{itemize}
          \item 
            \(\foA_i\) is the conclusion of an elimination rule instance
            for all \( 1 < i \leq m_1\),
          \item 
            \(\foA_i\) is the conclusion of an atomic rule instance 
            for all \( m_1 < i \leq m_2\),
          \item 
            \(\foA_i\) is the conclusion of an introduction rule instance 
            for all \( m_2 < i \leq n\),
        \end{itemize}
      \item 
        has the subformula property when \(\fa_i\) is a subformula

    \end{itemize}
  \end{definition}

  \begin{theorem}
    Let \(\derA\) be a derivation in HA. 
    Then all paths in \(\derA\) have are normal or 
    \(\derA\) is reducible.
  \end{theorem}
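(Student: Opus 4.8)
The plan is to prove the contrapositive: assuming that $\derA$ is \emph{not} reducible---so that no subderivation of $\derA$ is directly reducible, and hence $\derA$ contains no proper redex, no permutative redex, no reducible $\RuleNameIndE$ instance and no immediate simplification---I would show that every path has the stated shape, in which an initial block of elimination instances is followed by a block of atomic instances and then by a block of introduction instances. Fix a path $\foA_1, \dots, \foA_n$ with underlying formulas $\fa_1, \dots, \fa_n$. Because the path threads from the major premiss of an $\RuleName\lor{E}$ or $\RuleName\exists{E}$ instance up into a minor subderivation and only later descends again to the conclusion of that instance, the honest unit of analysis is not a single occurrence but a \emph{segment}: a maximal run of consecutive occurrences of one and the same formula along the path, which can grow only when the path passes from a minor premiss of an $\RuleName\lor{E}$/$\RuleName\exists{E}$ instance to its conclusion. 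Every other downward step changes the displayed formula, so each segment is produced by a single rule at its top and consumed by a single rule at its bottom; I would classify \emph{segments}, rather than occurrences, as elimination, atomic or introduction segments.

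The core of the argument is a single local fact: in a non-reducible derivation, whenever the bottom occurrence of a segment is the major premiss of an elimination instance $\riB$, that segment is a single occurrence produced by another elimination (or else the path's initial assumption). Indeed, suppose the bottom occurrence of a segment is the major premiss of $\riB$. If the segment has length greater than one, its bottom is the conclusion of an $\RuleName\lor{E}$ or $\RuleName\exists{E}$ instance feeding $\riB$, a permutative redex reduced by $\RedNamePerm\lor$ or $\RedNamePerm\exists$. If it has length one and is produced by an introduction rule for the very connective eliminated by $\riB$, it is a proper redex reduced by the matching $\RedNameProp{\ast}$. It cannot be produced by an atomic rule, since an atomic formula is never the major premiss of an elimination. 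In each reducible case $\derA$ would be directly reducible, against the hypothesis, so only the elimination (or initial-assumption) case---and, setting aside the $\RuleNameIndE$ rule discussed below---survives.

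With this in hand I would read off the ordering by descending through the segments of the path. Every segment consumed as the major premiss of an elimination is, by the previous paragraph, a single elimination-produced occurrence, so these form an (possibly empty) unbroken initial block; as soon as a segment is \emph{not} so consumed, no later segment can be either, because an elimination major premiss is never atomic and, again by the previous paragraph, a segment feeding an elimination is never produced by an introduction. Hence after the first non-eliminated segment only atomic segments and then introduction segments occur, and the straddling $\RuleName\lor{E}$/$\RuleName\exists{E}$ instances---whose major premiss lies in the elimination block while their conclusion is absorbed internally into a later segment---cause no violation. The immediate simplifications play only an auxiliary role: were some $\RuleName\lor{E}$/$\RuleName\exists{E}$ instance to discharge an assumption absent from the minor subderivation the path must enter, $\RedNameSimpl\lor$ or $\RedNameSimpl\exists$ would apply and $\derA$ would again be reducible, so every jump the path makes has a legitimate target.

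The step I expect to be the main obstacle is twofold. First, the $\RuleNameIndE$ rule is neither an introduction nor an elimination; its conclusion may be non-atomic and may legitimately serve as the major premiss of an elimination without producing any listed redex, since its only reduction, $\RedNameInd$, fires solely on a main term $\num 0$ or $\succ(\ltb)$. As the stated shape provides no slot for such a segment, I would need either to restrict to the induction-free fragment or to enlarge the normal-path definition with a clause governing $\RuleNameIndE$ and then re-check that the segment analysis above survives its presence. Second, reconciling the segment-level statement with the occurrence-level wording of the (here incomplete) normal-path definition---in particular making precise the straddling behaviour of $\RuleName\lor{E}$/$\RuleName\exists{E}$ and pinning down the trailing subformula-property clause---is the remaining delicate point that must be settled before the statement is fully precise.
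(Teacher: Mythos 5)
Your proposal is correct in its core and takes a genuinely different route from the paper's. The paper's own proof of this theorem is an unfinished draft that proceeds by induction on the structure of \(\derA\) with a case analysis on the last rule instance --- the same template as the article-side proof of \Cref{thm:em_reduction}: paths ending strictly inside a proper subderivation are dispatched by the inductive hypothesis, and only a path reaching the conclusion forces the case split, which the draft never actually carries out. You instead argue contrapositively and locally, in the classical Prawitz style: you group path occurrences into segments (maximal runs of one formula, grown only by passage from a minor premiss of \(\RuleName\lor{E}\)/\(\RuleName\exists{E}\) to its conclusion) and isolate one classification fact --- in a non-reducible derivation, any segment consumed as the major premiss of an elimination is a length-one occurrence produced by another elimination or the initial assumption, since length greater than one yields a permutative redex, an introduction at the top yields a proper redex, and an atomic top is impossible because major premisses of eliminations are non-atomic --- after which the elimination/atomic/introduction ordering falls out of a single downward scan. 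Your route makes the reason for normality visible at each step and correctly handles vacuous discharge via the immediate simplifications; the paper's route buys uniformity with the rest of its development, where all the structure lemmas are proved by the same structural induction, and never needs to reconcile segments with occurrences because it never names segments.

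Your diagnosis of the \(\RuleNameIndE\) obstacle is also correct, and it is a defect of the statement rather than of your argument: a derivation ending with an \(\RuleNameIndE\) instance whose main term is a free variable is normal under the paper's reducibility definition (the \(\RedNameInd\) clause fires only on \(\num 0\) or \(\succ(\ltb)\)), yet the final occurrence of its paths is the conclusion of a rule that is neither an elimination, nor atomic, nor an introduction, so the theorem as stated fails for full HA; moreover such a conclusion may feed an elimination as major premiss without creating any listed redex. The article-side development resolves exactly this point by adding an escape clause: in \Cref{thm:em_reduction} the outcome ``\(\derA\) contains a free term variable'' is admissible, and \Cref{thm:induction} shows it is the only way an \(\RuleNameIndE\) instance survives, since a closed normal main term is a numeral and then \(\RedNameInd\) applies. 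Adding the same disjunct (or a closedness hypothesis) to the present theorem, rather than restricting to the induction-free fragment, is the fix consistent with the rest of the paper, and your segment analysis then goes through unchanged; the remaining work you flag --- completing the trailing definition of normal path, whose subformula clause breaks off mid-sentence --- is indeed needed before the statement is fully precise, and the paper's draft does not supply it either.
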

  \begin{proof}
    By induction on the structure of \(\Pi\). 

    Consider any path \(\pathA = \foA_1, \dotsc, \foA_n\) of \(\Pi\).
    If \(\foA_n\) is not the conclusion of \(\Pi\) then \(\pathA\) is contained in a subderivation \(\derB\) of \(\derA\). 
    By inductive hypothesis either \(\pathA\) is normal or \(\derB\) is reducible and then so is \(\derA\) and we are done.

    Otherwise \(\foA_n\) is the conclusion of \(\Pi\). 
    Let \(\riA\) be the rule instance \(\foA_n\) is the conclusion of. 
    If \(\riA\) is an atomic axiom then \(n = 1\) and \(\pathA\) is normal.
    Otherwise we can assume that \(n > 1\). 
    Let \(\riB\) be the rule instance \(\foA_{n-1}\) is the conclusion of and let \(\derB\) the derivation of \(\foA_{n-1}\). 
    Let \(\pathB\) be the path \(\foA_1, \dotsc, \foA_{n-1}\) of \(\derB\). 
    \[
      \PrAx\dotso
      \PrAx\dotso
      \PrLbl[\riB]{}
      \PrUn{\fa_{n-1}}
      \PrAx\dotso
      \PrLbl[\riA]{}
      \PrTri{\fa_n}
      \DisplayProof
    \]
    We consider various cases depending on which rule \(\riA\) is an instance of.
    \begin{description}
      \item[atomic] 
        If \(\riA\) is an atomic rule then 
      \item[introduction]
        If \(\riA\) is an introduction rule instance 
      \item[\(\riA\) is atomic]
      \item[\(\riA\) is atomic]
    \end{description}
  \end{proof}

\end{thesis}

\end{document}